\documentclass[11pt]{article}%

\usepackage[T1]{fontenc}
\usepackage{amsfonts, amssymb,amsmath, amscd,slashed,amsthm,mathtools}
\usepackage{subfigure}
\usepackage{hyperref}
%\usepackage{graphicx}
%\usepackage{verbatim}
%\usepackage{tikz}
%\usepackage{psfrag}
%\usepackage{epsfig}
%\usepackage{subcaption}
%\usepackage{latexsym}
%\numberwithin{equation}{chapter}
%\numberwithin{figure}{chapter}
%\usepackage{diagbox}
%%%%

\newcommand{\floor}[1]{\left\lfloor#1\right\rfloor}

\newcommand{\RR}{\mathbb{R}^2}
\newcommand{\dist}{\textnormal{dist}}

\newtheorem{theorem}{Theorem}[section]
\newtheorem{lemma}[theorem]{Lemma}% 
\newtheorem{corollary}[theorem]{Corollary}% 
\newtheorem{proposition}[theorem]{Proposition}% 

\newtheorem{remark}[theorem]{Remark}%
\newtheorem{conjecture}[theorem]{Conjecture}%
\newtheorem{claim}[theorem]{Claim}%

\newtheorem{definition}[theorem]{Definition}%

\raggedbottom

\begin{document}

\title{Coloring distance graphs on the plane }

\author{Joanna Chybowska-Sok\'o\l{} \thanks{j.sokol@mini.pw.edu.pl}
\thanks{partially supported by the National Science Center of Poland under grant no.~2016/23/N/ST1/03181.} \and {Konstanty} {Junosza-Szaniawski}\thanks{konstanty.szaniawski@pw.edu.pl} \and {Krzysztof} {Węsek}\thanks{k.wesek@mini.pw.edu.pl}}

\date{{Faculty of Mathematics and Information Science}, {
Warsaw University of Technology}, {{Koszytkowa 75},  {00-662}  {Warsaw},{Poland}}}

\maketitle
\abstract{We consider the coloring of certain distance graphs on the Euclidean plane. Namely, we ask for the minimal number of colors needed to color all points of the plane in such a way that pairs of points at distance in the interval $[1,b]$ get different colors. 
The classic Hadwiger-Nelson problem is a special case of this question -- obtained by taking $b=1$.
%The special case of this question in which $b=1$ is known as the classic Hadwiger-Nelson problem.
The main results of the paper are improved lower and upper bounds on the number of colors for some values of $b$. In particular, we determine the minimal number of colors for two ranges of values of $b$ - one of which is enlarging an interval presented by Exoo and the second is completely new. Up to our knowledge, these are the only known families of distance graphs on $\mathbb{R}^2$ with a determined nontrivial chromatic number. Moreover, we present the first $8$-coloring for $b$ larger than values of $b$ for the known $7$-colorings.
As a byproduct, we give some bounds and exact values for bounded parts of the plane, specifically by coloring certain annuli.\\
keywords: coloring, distance graphs, Hadwiger-Nelson problem\\MSC Classification {05C15, 05C10, 05C62}}

\section{Introduction}\label{sec:intro}

How many colors are needed to color the Euclidean plane $\mathbb{R}^2$ so that no pair of points at distance $1$ get the same color?  This famous open question is known as the Hadwiger-Nelson problem, named after Hugo Hadwiger and Edward Nelson. It is also often formulated as a question about the chromatic number of a unit distance graph of the plane. A graph with a set of vertices $\mathbb{R}^2$ and a set of edges as pair of points at euclidean distance. Any of its subgraphs is called a \emph{unit distance graph}. For short, the parameter in question is often called \emph{the chromatic number of the plane}.

\begin{figure}[h]
\centering
		\includegraphics[width=0.2\textwidth]{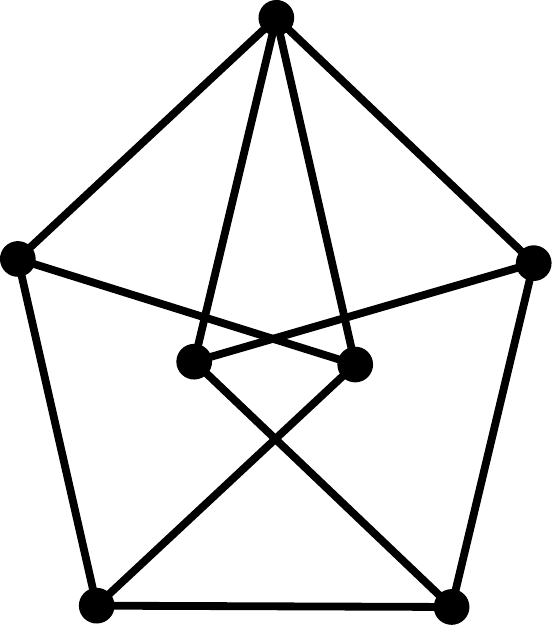}
		\caption{The so-called Moser spindle graph embedded as a unit distance graph in the plane (edges stand for segments of length $1$), with the chromatic number equal to $4$.}
		\label{fig:moser_spindle}
\end{figure}

\begin{figure}[h]
	\begin{center}
		\includegraphics[width=0.9\textwidth]{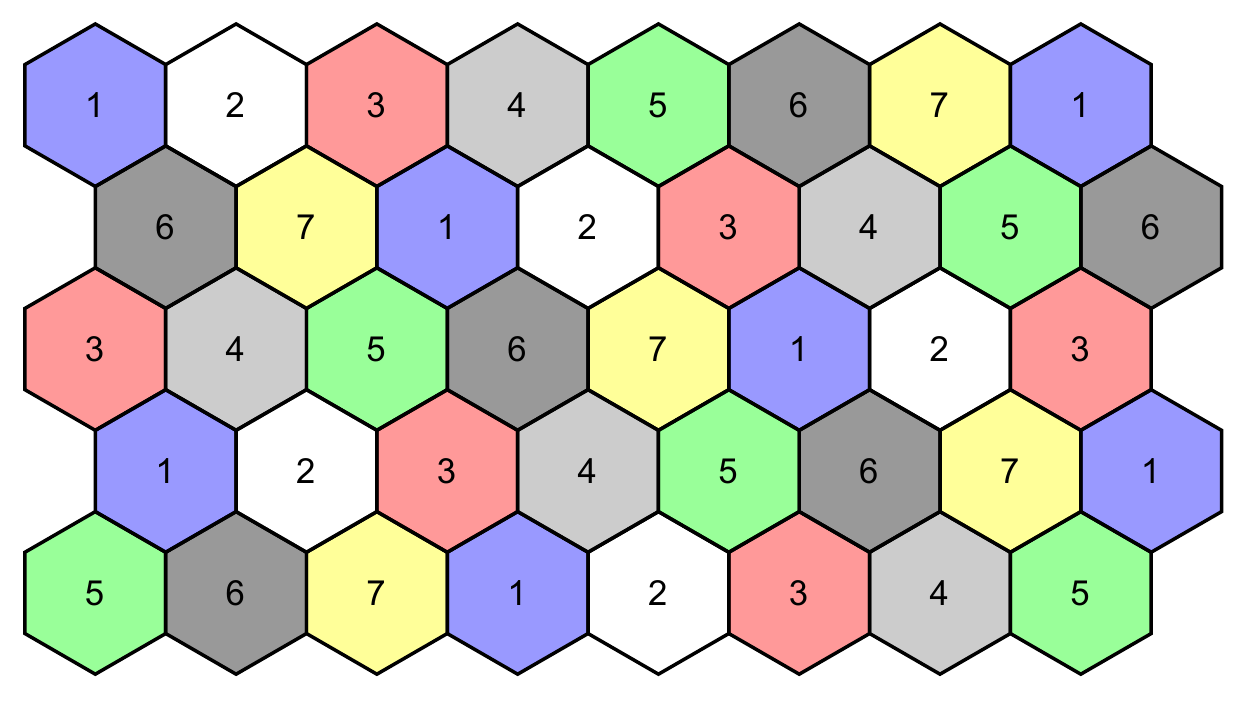}
		\caption{The coloring scheme of a of $7$-coloring of the unit distance graph of the plane, with hexagons of diameter slightly smaller than $1$ (sufficiently close to $1$).}
		\label{fig:7col-eps}
	\end{center}
\end{figure}

The problem was originally proposed by Edward Nelson in 1950 (see \cite{Soifer2009}) in hope of being helpful in the four-color problem of coloring maps. This idea did not work as desired, but the question proved itself to be interesting on its own, to put it mildly. In the same year, Nelson observed that at least $4$ colors are needed, as there are small, finite sets of points that are not $3$-colorable. The smallest example, consisting of just $7$ points, was found by Moser and Moser \cite{MoserMoser1961} and is known as Moser spindle (see Figure~\ref{fig:moser_spindle}). Still, in 1950, John Isbell found the upper bound of $7$ by the following coloring: take a tiling of the plane by regular hexagons of a diameter slightly smaller than $1$ and then color each hexagon with one color according to the scheme presented in Figure~\ref{fig:7col-eps} (colors of the borders do not matter). It is easy to check that two different hexagons of the same color are at distance greater than $1$, thus this coloring satisfies the required condition. The same coloring was considered a few years before by Hadwiger \cite{Hadwiger1945}, although in a different context. Yet Hadwiger \cite{Hadwiger1961} was the first to publish both bounds in a scientific article.
%took an important role in popularizing the problem by publishing the first paper discussing the question. 
Somehow surprisingly, the aforementioned bounds remained unchanged for 68 years - as long as we consider the full generality. Nevertheless, advanced studies of the question, its subproblems, and other related topics provided some understanding.
For example, if we consider only measurable colorings (i.e. with measurable colors) then at least $5$ colors are necessary (Falconer \cite{Falconer1981}) and if we demand that the coloring consists of regions bounded by Jordan curves then at least $6$ colors are required (incorrect proof by Woodall \cite{WOODALL1973}; corrected proof by Townsend \cite{Townsend1979,Townsend2005}). On the other hand, it follows from De Bruijn–Erdos Compactness Theorem \cite{BruijnErdos1951} that the chromatic number of the plane is equal to the maximum chromatic number of its finite subsets, assuming the axiom of choice. Note that this assumption is crucial, as the influence of axiomatization of set theory on the chromatic number of geometrical graphs is a nuanced topic (see Soifer \cite{Soifer2009} for a discussion) with implications concerning the existence of measurable colorings. In particular, Payne \cite{Payne2009} constructed a unit distance graph with the chromatic number different for two different consistent axiom systems - the one with the axiom of choice giving the smaller number. In fact, in his considerations, the difference between the two axiomatizations essentially corresponds to considering measurable colorings and arbitrary colorings, respectively - hence showing that demanding measurability, in general, makes a difference.
%it is essentially equivalent to contrast those two axiomatizations and measurable colorings with arbitrary colorings - hence showing that demanding measurability, in general, makes a difference.
%it follows that for this graph the possible requirement of measurability for colorings makes a difference 
However, we do not know whether the axiom of choice is relevant to the chromatic number of the plane itself.
%Generally, across the decades the Hadwiger-Nelson problem inspired many interesting results in the touch-point of combinatorics and geometry, a vast number of challenging problems, and various applications.
Generally, across the decades, the Hadwiger-Nelson problem inspired many interesting results in combinatorics, geometry, topology, measure theory or abstract algebra, a vast number of challenging problems, and various applications. The list of variants include for example coloring of Euclidean spaces of higher dimensions %Cantwell1996, Nechushtan2002, Coulson2002, RadoivcicToth2003, Ivanov2006, Cibulka2008
\cite{ExooIsmailescuLim2014, ExooIsmailescu2014, CherkashinKulikovRaigorodskii2018}, fractional coloring 
%FisherUllman1992, HochbergODonnell1993,
\cite{ScheinermanUllman2011, GrytczukJunoszaSokolWesek2016, CranstonRabern2017, ExooIsmailescu2017} or circular coloring \cite{DeVosEbrahimiGheblehGoddynMoharNaserasr2007, JunoszaSzaniawski2018}.
We refer the reader to the article of Soifer \cite{Soifer2009} for an extensive discussion on the history of the question (including Soifer's private investigations) and a pleasant presentation of selected related problems.
%, for example, the coloring of Euclidean spaces of higher dimensions.

And then the breakthrough happened. In 2018, Aubrey de Grey \cite{deGrey2018}, biogerontologist and computer scientist, proved that the chromatic number of the plane is at least $5$. Soon, a different, independent proof was published by Exoo and Ismailescu \cite{ExooIsmailescu2019}. Both first proofs were based on constructing a finite set of points (or a finite unit distance graph, if you prefer) forcing $5$ colors - although not as small and simple as the ones used to force $4$ colors. In both cases, the proof is a mixture of theoretical reasoning and computer computations. The breakthrough attracted a new wave of interest in the problem and its relatives. For example, a new Polymath project has been started \cite{Polymath16} in order to coordinate collaboration between those interested, both professional and amateur mathematicians. Considerable efforts were involved to the goal of finding as small example as possible (see Heule \cite{Heule2018_5, Heule2019}, Parts \cite{Parts2020_minimization}), in particular using clausal proof optimization. Note that the basic de Grey's graph consisted of $20425$ vertices, which was shrunk by the author to $1581$ by additional steps - but more minimization was possible. The current record of $509$ vertices is held by Parts \cite{Parts2020_minimization}. The minimization efforts form other examples supporting a general observation that the development in computer's computational power helped in the development of this area - the mentioned papers, apart from interesting ideas, make use of even hundreds or thousands of CPU computation hours. In some cases, a computer is used to run an algorithm written specifically for a certain subproblem, in other cases some general tools are used to check the colorability of constructed graphs, like SAT or Integer Programming solvers. Let us give an example of computer-driven progress related to a relative of the fractional chromatic number of the plane - without explaining the parameter here. In 2017, Cranston and Rabern \cite{CranstonRabern2017} published a paper with a clever proof by discharging method for the best known lower bound for this parameter. Currently, according to preliminary, unreviewed results of Polymath16 project (and thanks to a mixture of computers computation power and the power of the human mind), a unit distance graph on just $35$ vertices yielding a better lower bound has been found \cite{Parts2019_FCN_small} - and the best known bound is significantly better \cite{ExooIsmailescu2017,Parts2019_FCN_general}.
%It cannot be denied that the development in computer's computational power was substantial to the development in this area.
Only recently, a human verifiable (and still constructive) proof of de Grey's theorem was proposed by Parts \cite{Parts2020_5}, however, it still contains a large number of small cases for step-by-step checking.  The topic does not seem to be dried up, as another proof was presented by Voronov, Neopryatnaya, and Dergachev \cite{VoronovNeopryatnayaDergachev2021}. This construction has much more vertices but does not contain a copy of the Moser spindle, as opposed to previous constructions. In general, having various, especially relatively small non-$4$-colorable unit distance graphs with strong properties can be useful for another tempting step: hypothetic construction of a non-$5$-colorable unit distance graph, if it exists. First efforts in this direction already started, for example by Heule \cite{Heule2018_6}.

	Can we say something about the minimal size of such examples? It is not easy to obtain such assertions and the only known bounds are related to the following question: what is the maximal $6$-colorable (or $5$-colorable) portion of the plane? Results in this area were obtained by Pegg, Jr. \cite{Soifer2009}, which were improved by Pritikin \cite{Pritikin1998}, and then improved by Parts \cite{Parts2020_percent}. In particular, Parts' theorem states that more than $99.985698\%$ of the plane can be $6$-colored and at least $95.99\%$ of the plane can be $5$-colored. The presented colorings are then used to show that any subgraph of $G_{\{1\}}$ with at most $6992$ vertices can be $6$ colored, and any subgraph of $G_{\{1\}}$ with at most $24$ vertices can be $5$ colored.

The article is devoted to one of the most straightforward generalizations of Hadwiger-Nelson problem. In the classic question, only one distance is forbidden in any color class, namely $1$. What if we forbid more than one distance, let say, some set $D$ of distances? How many colors are needed for a coloring in which no color class contains a pair of points at distance from $D$? Our results concentrate on $D$ being an interval, but the general knowledge about other sets is also discussed.  
 
This leads to a more general notion of distance graphs. For  $D\subseteq\mathbb{R}_+$ and a metric space $X$   let us define \emph{the distance graph} $G_{D}(X)$ as a graph on the set of vertices $X$ and with $x,y\in X$ adjacent if the distance between $x$ and $y$ belongs to $D$. Distance graphs were considered first by Eggleton, Erdos and Skilton \cite{EggletonErdosSkilton1985,EggletonErdosSkilton1990} in case of $X=\mathbb{R}$ and $X=\mathbb{Z}$ (understood as Euclidean metric spaces). It would be difficult to give a comprehensive summary of research directions concerning the coloring of distance graphs, as various variants of distance graphs were already studied, including non-Euclidean spaces (see for example \cite{Kloeckner2015}). However, still relatively little is known for many problems considered so far. Particularly much work was devoted to integer distance graphs, that is, the case of $X=\mathbb{Z}$. %TODO: poprawić poprzednie dwa zdania
%However, still many fundamental problems are widely open. Particularly much work was devoted to integer distance graphs, that is, the case of $X=\mathbb{Z}$.
%Można po prostu skasować zdanie "However..."
For example, Katznelson \cite{Katznelson2001} and Ruzsa, Tuza, and Voigt \cite{RuzsaTuzaVoigt2002} independently proved that if a set $D$ consists of a sequence with exponential growth, then the chromatic number is finite.
%For example, it is known that if a set $D$ consists of a sequence with exponential growth, then the chromatic number is finite \cite{Katznelson2001,RuzsaTuzaVoigt2002}.
On the other hand, providing a complete characterization of sets $D$ with finite $G_{D}(\mathbb{Z})$ seems to be a difficult problem connected to some highly non-trivial questions in additive number theory.
%the chromatic number is determined for small sets $D$ it is known that so-called lacunary sets $D$ have finite chromatic number \cite{}.  
We note that some publications use the name 'distance graphs' for the class of integer distance graphs itself.
%We note that for their special interest among graphs $G_{D}(X)$, some publications use the name 'distance graphs' for the class of integer distance graphs itself.
%As for the special interest among graphs $G_{D}(X)$, integer distance graphs are sometimes referred to simply as distance graphs.

Here we are interested in $G_{D}(X)$ with $X=\mathbb{R}^2$ (understood as Euclidean metric space). For short, we will write $G_{D}$ for $G_{D}(\mathbb{R}^2)$. In the language of distance graphs, the unit distance graph of the plane can be described as $G_{\{1\}}$. Note that $G_{\{1\}}$ is isomorphic to $G_{\{d\}}$ for any $d>0$. What if the set $D$ contains more than one element? Let us first consider $\lvert D \rvert  =2$. As we can use scaling, without loss of generality we assume that $1$ is the smaller element of $D$. 
%First value of $d$ for which $\chi({G_{\{1,d\}} (\mathbb{R}^2)})\ge 5$ was found by
Probably, the first results concerning $\chi(G_{\{1,d\}})$ was proved by Huddleston \cite{OwingsTetivaHuddleston2008}. In this paper, few examples of $d$ forcing $\chi(G_{\{1,d\}})\ge 5$ were presented, but most importantly, it was shown that $\chi(G_{\{1,\frac{\sqrt{5}+1}{2}\}})\ge 6$ (by constructing of a finite set of points). Unaware of Huddleston's results, Katz, Krebs, and Shaheen \cite{KatzKrebsShaheen2014} independently proved one of Huddleston's lower bounds with $5$. Exoo and Ismailescu \cite{ExooIsmailescu2018} obtained more values forcing $5$ colors. From that point in time, 
$\chi(G_{\{1\}})\ge 5$ by de Grey comes into being and implies the mentioned lower bounds of $5$ colors. However, all those proofs are different and simpler than any proof of $\chi(G_{\{1\}})\ge 5$, and hence give some additional insight. Later on, Exoo and Ismailescu
\cite{ExooIsmailescu2020} showed that $\chi(G_{\{1,2\}})\ge 6$ (construction of a finite set of points, computer-aided proof); Palvolgyi and Agoston \cite{PalvolgyiAgoston2019} claim to have proved the same for $d=\sqrt{3}$ and $d=\frac{\sqrt{3}+1}{2}$ (worth noting: probabilistic method); Parts \cite{Parts2020_two_distances} presented a simpler proof of Exoo-Ismailescu result by constructing a set of only $31$ points. It remains open whether we can force $7$ colors with some $\{1,d\}$.

% first example of $d$ with a proof that $\chi(G_{\{1,d\}})\ge 5$ was found by Katz, Krebs and Shaheen \cite{KatzKrebsShaheen2014}. Later Exoo and Ismailescu \cite{ExooIsmailescu2018} obtained more such values and presented easier proof of the result of Katz, Krebs, and Shaheen. For any $d$, the inequality $\chi(G_{\{1,d\}})\ge 5$ is now implied by the breakthrough result of $\chi(G_{\{1\}})\ge 5$ by de Grey \cite{deGrey2018}. However, the proofs of the result of Katz, Krebs, and Shaheen, and of the result of Exoo, and Ismailescu are different and simpler and hence give some additional insight.
%still it can be of significance, as the arguments for the former are simpler.
%It remains an open question whether there exists $d$ for which $\chi(G_{\{1,d\}})\ge 6$. 

The opposite direction is to consider cases with $D$ being infinite and, moreover, unbounded. In particular, let us discuss $D$ equal to the set of odd integers. It was proved by Ardal, Manuch, Rosenfeld, Shelah and Stacho \cite{ArdalManuchRosenfeldShelahStacho2009} that $\chi(G_{\{1,3,\ldots\}})\ge 5$ (now implied by de Grey's result). However, the only known upper bound is the trivial $\aleph_0$, hence we do not even know if $\chi(G_{\{1,3,\ldots\}})$ is finite. On the other hand, it was proven by various authors that if we require colors to be Lebesgue measurable then an infinite number of colors is needed. The most recent proof is by Steinhardt \cite{Steinhardt2009} who used spectral graph theory.
This result can also be shown as a direct consequence of a theorem of Furstenberg, Katznelson, and Weiss \cite{FurstenbergKatznelsonWeiss1990}. For a measurable set $A\subseteq \mathbb{R}^2$ and Lebesgue measure denoted by $m$, the upper density of $A$ is defined as $\limsup\limits_{r\rightarrow +\infty} \frac{m(A\cap B(r))}{m(B(r))}$. The Furstenberg-Katznelson-Weiss theorem states that for $d\ge2$ and a measurable set $A\subseteq \mathbb{R}^d$ with positive upper density, all sufficiently large real numbers occur as distances between elements of $A$. The original proof of Furstenberg, Katznelson, and Weiss was ergodic-theoretic, see also Bourgain \cite{Bourgain1986} for a  harmonic-analytic proof and Falconer and Marstrand \cite{FalconerMarstrand1986} for a direct geometric proof.

Let us call a coloring of $\mathbb{R}^2$ \emph{measurable} if every single color is a Lebesgue measurable set. In~fact, Furstenberg-Katznelson-Weiss theorem implies a much more general statement: if $D$ contains arbitrarily large numbers then no measurable coloring of $G_D(\mathbb{R}^2)$ using a finite number of colors exists. 
%Something even stronger can be obtained from a theorem by Bukh \cite{} which generalizes the Furstenberg-Katznelson-Weiss theorem. Namely, nonexistence of any finite measurable coloring of $G_D(\mathbb{R}^2)$ follows if $D$ contains elements with arbitrarily large ratios (in particular, elements arbitrarily close to $0$).
Moreover, a theorem by Bukh \cite{Bukh2008} (which generalizes the Furstenberg-Katznelson-Weiss theorem) implies that even less strong assumption is sufficient. Namely, it is enough to assume that $D$ contains pairs of elements with arbitrarily large ratios (in particular, $D$ with elements arbitrarily close to $0$ satisfies this assumption). In other words, if $D$ is not a subset of an interval $[a,b]$ with $a,b\in \mathbb{R_+}$ then no measurable coloring using a finite number of colors exists.
 %then no measurable coloring of $G_D$ using a finite number of colors exists. 
%In other words, if $D$ is not a subset of an interval $[a,b]$ with $a,b\in \mathbb{R_+}$. 
On the other hand, if $D$ is a subset of such an interval then $\chi(G_D)$ is finite, as proved by Exoo \cite{Exoo2005}. 
However, we note that, for a fixed number of colors, nonexistence of a measurable coloring for a geometrically defined graph is not necessarily accompanied by nonexistence of any coloring. Shelah and Soifer \cite{ShelahSoifer2003,ShelahSoifer2004} and Soifer \cite{Soifer2005} presented examples of geometrically defined graphs that admit colorings using a finite number of colors but do not admit measurable colorings even for countably many colors. 
In particular, if we set $D=\{ \vert \sqrt{2}+q\rvert : q\in \mathbb{Q}\}$,  then $G_{D}(\mathbb{R})$ has a $2$-coloring but does not admit any measurable coloring with countably many colors. 
%|q-sqrt| = |-(sqrt-q)|=|sqrt+(-q)|
%In other words, if elements of $D$ are not bounded from below and above by a pair of positive numbers.
%More recently, Steinhardt \cite{Steinhardt2009} proved the non-measurability of any finite coloring of $G_{\{1,3,\ldots\}}$ using spectral graph theory.
%It can be shown using mainly spectral graph theory (see Steinhardt \cite{Steinhardt2009}) or relying on measure theory as a consequence of the theorem that all measurable sets with positive density at infinity contain all sufficiently large distances (see for example the paper of Falconer and Marstrand \cite{FalconerMarstrand1986}).
%Old: Another interesting case is when $D$ is the set of odd integers. It was proved by Ardal, Manuch, Rosenfeld, Shelah and Stacho \cite{ArdalManuchRosenfeldShelahStacho2009} that $\chi({G_{\{1,3,\ldots\}}} (\mathbb{R}^2))\ge 5$. However, the only known upper bound is the trivial $\aleph_0$, hence we do not even know if $\chi({G_{\{1,3,\ldots\}}} (\mathbb{R}^2))$ is finite. On the other hand, it was proven independently by various authors that if we require colors to be Lebesgue measurable then an infinite number of colors is needed. It can be shown using mainly spectral graph theory (see Steinhardt \cite{Steinhardt2009}) or relying on measure theory as a consequence of the theorem that all measurable sets with positive density at infinity contain all sufficiently large distances (see for example the paper of Falconer and Marstrand \cite{FalconerMarstrand1986}).

In this article, we are interested in $G_{D}$ for $D$ equal to an interval $[a,b]$ for some $0<a<b$. 
%For simplicity let us denote $G_{[a,b]}=G_{[a,b]}(\mathbb{R}^2)$.
As we can use scaling, without loss of generality we assume that $a=1$.
%%One of possible directions in order to obtain more understanding of the Hadwiger-Nelson problem and its relatives is to consider a more general set of restricting distances. Let $G_{[a,b]}$ denote the graph with the set of vertices $\mathbb{R}^2$ and two vertices adjacent if they are at distance from the interval $[a,b]$. As we can use scaling, without loss of generality we assume that $a=1$.
%%Such graphs were first proposed by Exoo \cite{Exoo2005} (although with a slightly different notation). Later, such.
%In this chapter, we will consider these graphs.
Some important results on coloring of such graphs were presented by Exoo \cite{Exoo2005} (using slightly different notation). His motivation for considering such graphs was the following. 
%%One can observe that the well known $7$-coloring of $G_{\{1\}}$ based on a hexagonal grid (see Figure~\ref{fig:7col_1}) is proper also for supergraphs of $G_{\{1\}}$ of the form of $G_{[1,b]}$ with $b\le \sqrt{7}/2$. 
Let us revisit the well known $7$-coloring of $G_{\{1\}}$ from Figure~\ref{fig:7col-eps}. This time take the same pattern of colors, but choose hexagons of diameter $1$. If we choose the colors of borders cleverly (see Figure~\ref{fig:7col_1}), then the coloring will still satisfy the condition for $G_{\{1\}}$. It turns out that this coloring is proper also for supergraphs of $G_{\{1\}}$ of the form of $G_{[1,b]}$ with $b\le \sqrt{7}/2$.
%We can add many edges to $G_{\{1\}}$ and still bound the chromatic number by $7$. 
Hence, despite having more edges in such supergraphs of $G_{\{1\}}$, we still can bound the chromatic number by $7$. Can we obtain better lower bounds on $\chi(G_{[1,b]})$ for such $b$ than on $\chi(G_{\{1\}})$?
%As a far-reaching goal of his work, Exoo conjectured the following:
%The following conjecture posed by Exoo can be seen as the :

\begin{figure}[h]
	\begin{center}
		\includegraphics[width=0.2\textwidth]{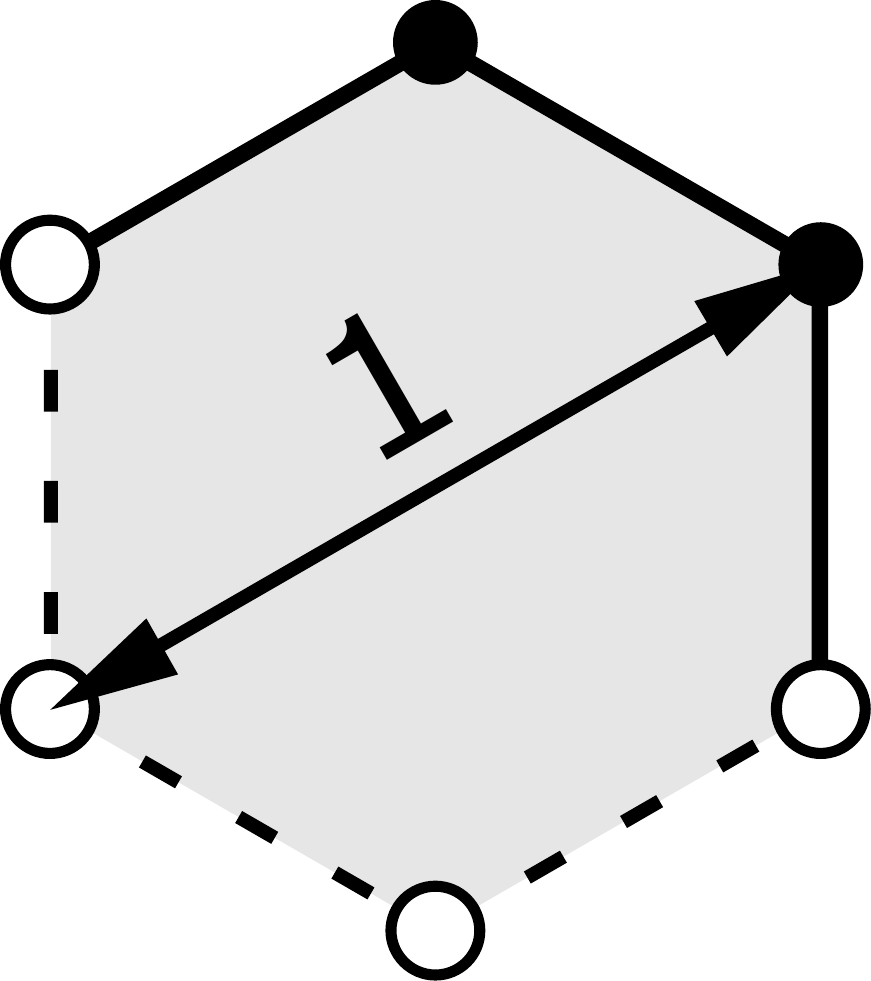}
		\caption{Color assignment from Figure \ref{fig:7col-eps} on the borders  (solid lines stand for the same color as in the interior). Proper for $G_{[1,b]}$ with $1 \le b\le \sqrt{7}/2$.}
		\label{fig:7col_1}
	\end{center}
\end{figure}

Among other results, Exoo managed to determine the chromatic number of some of such graphs $G_{[1,b]}$:
\begin{theorem}[Exoo \cite{Exoo2005}]\ \\ \label{thm:exoo}
	For $b\in (\sqrt{43}/5, \sqrt{7}/2] \approx (1.31149,1.32287]$ it holds $\chi(G_{[1,b]})=7$.
\end{theorem}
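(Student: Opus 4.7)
The theorem combines an upper bound $\chi(G_{[1,b]}) \le 7$ for $b \le \sqrt{7}/2$ with a lower bound $\chi(G_{[1,b]}) \ge 7$ for $b > \sqrt{43}/5$; I would handle the two directions separately.

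For the upper bound I plan to use the coloring already drawn in Figure~\ref{fig:7col_1}: the hexagonal tiling by regular hexagons of diameter exactly $1$ (so side $1/2$), colored by the standard $7$-coloring scheme, with the prescribed boundary-color convention. One has to check that no two same-colored points lie at distance in $[1, \sqrt{7}/2]$. For points in a single closed hexagon the distance is at most the diameter $1$, with equality only between diametrically opposite vertices, and the boundary convention is set up precisely to prevent any such pair from sharing a color. For points in distinct same-colored hexagons, the centers form a sublattice of the triangular lattice with minimum separation $\sqrt{7}/2$. I would go orbit by orbit through this sublattice, compute the interval $[D_{\min}, D_{\max}]$ of realizable point-to-point distances for each displacement vector, and verify that $[1, \sqrt{7}/2]$ is disjoint from the union of these intervals (or that the pairs achieving distances inside $[1, \sqrt{7}/2]$ are ruled out by the boundary colors). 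The delicate case is the nearest-neighbor orbit: because the sublattice is rotated relative to the hexagons' axes, the centers' separation $\sqrt{7}/2$ is not parallel to any vertex-to-vertex direction, and a careful geometric check is what produces the clean upper threshold $\sqrt{7}/2$.

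For the lower bound I would aim to exhibit an explicit finite set $S \subset \mathbb{R}^2$ whose induced distance graph $G_{[1, b]}(S)$ has chromatic number at least $7$ for every $b > \sqrt{43}/5$. The strategy is to start from a smaller configuration that already forces $6$ colors — for instance one obtained by gluing together rotations or translates of the Moser spindle together with extra points whose mutual distances fall in $(1, b)$ — and to enrich it with further points so that the remaining hypothetical $6$-colorings are eliminated. I expect the specific number $\sqrt{43}/5$ to emerge as the distance between a distinguished pair of points in the final configuration: this pair is an edge of $G_{[1,b]}(S)$ exactly when $b > \sqrt{43}/5$, and it is exactly this edge whose presence rules out any surviving $6$-coloring. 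Verifying non-$6$-colorability of the candidate graph is the step I would delegate to a computer (a SAT solver, or a direct branch-and-bound computation of the chromatic number).

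The genuinely hard step is the lower bound: pinning down a finite configuration and proving that it admits no proper $6$-coloring. The upper bound reduces to a finite geometric case check on the hexagonal tiling, while the lower bound is a combinatorial optimization over a realizable-distance set, and the oddly specific value $\sqrt{43}/5$ strongly suggests that the witnessing configuration was (and should be) discovered by computer search rather than by a closed-form geometric construction.
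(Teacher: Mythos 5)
This theorem is not proved in the paper at all --- it is quoted from Exoo \cite{Exoo2005}, and the surrounding text only describes his method: the upper bound is the diameter-$1$ hexagonal $7$-coloring of Figure~\ref{fig:7col_1}, and the lower bound comes from a finite, computer-checked configuration, namely a bounded portion of a carefully scaled regular triangular grid. Your plan matches this in both halves, so in approach it is essentially the same; your guess that $\sqrt{43}/5$ arises as a critical pairwise distance in the witness set is also consistent with a triangular grid of spacing $1/5$, since $43=1^2+1\cdot 6+6^2$. Two small corrections: the centers of same-colored hexagons form a sublattice with minimum separation $\sqrt{21}/2$, not $\sqrt{7}/2$; the value $\sqrt{7}/2$ is the minimum distance between the same-colored hexagons themselves, attained vertex-to-vertex, which is exactly why the threshold is clean. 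And for the lower bound your proposal remains a plan rather than a proof --- no explicit configuration is exhibited --- which is unavoidable here, as Exoo's own argument is computer-aided and the paper offers no human-checkable substitute.
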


%To our knowledge, the interval given in Theorem~\ref{thm:exoo} was the only known set of values of $b$ such that $\chi(G_{[1,b]})$ was determined. 
%To our knowledge, the family of intervals $[1,b]$ satisfying conditions given in Theorem~\ref{thm:exoo} is the only known family of sets $D$ for which finite, nontrivial value of $\chi(G_{D}(\mathbb{R}^2))$ was determined.
Up to now, the interval given in Theorem~\ref{thm:exoo} was the only known set of values of $b$ such that $\chi(G_{[1,b]})$ was determined. Moreover, to our knowledge, this family of distance graphs was the only family of distance graphs on $\mathbb{R}^2$ with a determined nontrivial chromatic number.
We emphasize that the real contribution of Theorem~\ref{thm:exoo} lays in establishing the lower bound for $\chi(G_{[1,b]})$ (the method behind it will be discussed later). 
%The upper bound comes from the observation that the well known $7$-coloring of $G_{[1,1]}$ based on hexagonal tiling is proper also for $G_{[1,b]}$ for any $b\le \sqrt{7}/2$. 
Together with some computational experiments, Exoo considered Theorem~\ref{thm:exoo} a clue for a strong conjecture.
\begin{conjecture}[Exoo \cite{Exoo2005}]\ \label{conj:7-exoo} \\
	For $b>1$ sufficiently close to $1$, it holds $\chi(G_{[1,b]})=7$.
\end{conjecture}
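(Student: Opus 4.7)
Since the excerpt already displays a refined Isbell coloring (Figure~\ref{fig:7col_1}) that is proper for $G_{[1,b]}$ whenever $1\le b\le \sqrt{7}/2$, the upper bound $\chi(G_{[1,b]})\le 7$ is available for all $b$ close to $1$, and the content of the conjecture is purely the matching lower bound. By the De~Bruijn--Erd\H{o}s compactness theorem this reduces, under AC, to producing, for some (ideally very small) $\varepsilon>0$, a finite subgraph $H\subseteq G_{[1,1+\varepsilon]}$ with $\chi(H)=7$. Note that by monotonicity of $G_{[1,b]}$ in $b$, once such an $H$ exists one automatically obtains $\chi(G_{[1,b]})=7$ for every $b\in[1+\varepsilon,\sqrt{7}/2]$, so the proof reduces to a single extremal construction with distance spread $\varepsilon$ as small as possible.

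The first step of my plan would be to dissect the configuration behind Theorem~\ref{thm:exoo}: list its critical pairs, sort them by distance, and identify the ``hot'' edges whose lengths sit near the upper endpoint of $[1,\sqrt{43}/5]$. I would then try to deform Exoo's configuration by bending and gluing in additional rigid substructures (Moser spindles and related $4$-chromatic gadgets), so that all forbidden distances are compressed into $[1,1+\varepsilon]$ while preserving the combinatorial structure that forces seven colors. In parallel, following the de~Grey/Heule/Parts paradigm, I would search computationally for $7$-chromatic finite graphs $H\subseteq G_{[1,1+\varepsilon]}$ generated as unions of orbits of small seed point sets under rotations by carefully chosen angles, testing colorability with SAT or ILP solvers and shrinking $\varepsilon$ as far as the computation allows. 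A natural variational viewpoint is to treat $\varepsilon$ as a continuous parameter and ask at what threshold value the $7$-chromatic property first appears when one continuously deforms a candidate family.

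The essential obstacle is that as $b\to 1^+$ the graph $G_{[1,b]}$ collapses to $G_{\{1\}}$, whose chromatic number is only known to lie between $5$ and $7$. If $\chi(G_{\{1\}})\le 6$ then any $7$-forcing subgraph of $G_{[1,1+\varepsilon]}$ must genuinely exploit edges of length in $(1,1+\varepsilon]$, but in any bounded region such edges are geometrically rare and demand extreme precision in point placement; hence the desired $H$ is likely to be large, with many interlocking constraints, in the spirit of the current minimal $5$-chromatic unit distance graphs. Ruling out that some small perturbation of the hexagonal $7$-coloring yields a proper $6$-coloring of every candidate finite graph is, in my view, the core difficulty.

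As a complementary avenue, one could attack the measurable version of the conjecture: show that the supremum of upper densities of $[1,b]$-independent measurable subsets of $\mathbb{R}^2$ is a strictly decreasing function of $b$ that drops below $1/7$ for all sufficiently small $b-1>0$. Adapting the Croft-style harmonic-analytic autocorrelation arguments that bound the density of $\{1\}$-avoiding sets --- but now averaging the autocorrelation constraint over the entire interval of forbidden distances --- would yield $\chi_m(G_{[1,b]})\ge 7$, supplying strong evidence for, and a partial resolution of, the full conjecture.
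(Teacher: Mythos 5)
This statement is a \emph{conjecture} (due to Exoo), not a theorem: the paper contains no proof of it and explicitly leaves it open. The paper's best progress toward it is Corollary~\ref{cor:determined}, which establishes $\chi(G_{[1,b]})=7$ only for $b>\sqrt{2-2\sin(\frac{18\pi}{325})}\approx 1.28599$, while for $b$ genuinely close to $1$ the best known lower bound remains $6$ (Theorem~\ref{thm:>=6}). Your submission is accordingly a research program rather than a proof: you correctly observe that the upper bound $\chi(G_{[1,b]})\le 7$ for $b\le\sqrt{7}/2$ is already in hand and that only the lower bound is at stake, but no $7$-chromatic finite subgraph of $G_{[1,1+\varepsilon]}$ is actually exhibited, and the proposed avenues (deforming Exoo's configuration, SAT/ILP searches, density arguments for the measurable variant) are left entirely unexecuted. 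Nothing here closes the gap between $6$ and $7$.

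There is also a logical flaw in your reduction that is worth flagging. Since $\chi(G_{[1,b]})$ is nondecreasing in $b$ and bounded above by $7$ for $b\le\sqrt{7}/2$, the conjecture (``for all $b>1$ sufficiently close to $1$'') is equivalent to the assertion that $\chi(G_{[1,b]})\ge 7$ for \emph{every} $b>1$. A single finite witness $H\subseteq G_{[1,1+\varepsilon]}$ whose $7$-chromaticity genuinely uses edges of length in $(1,1+\varepsilon]$ only yields the lower bound for $b\ge 1+\varepsilon$; for $b<1+\varepsilon$ that same point set induces a graph with fewer edges, which may well be $6$-colorable. So De Bruijn--Erd\H{o}s reduces the conjecture not to one extremal construction but to an infinite family of them, one for each $\varepsilon>0$ --- unless one produces a $7$-chromatic unit-distance graph outright, which would resolve the Hadwiger--Nelson problem itself. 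Your own third paragraph essentially concedes this difficulty; it is the core of why the conjecture is open, not a side issue that a cleverer computation is known to circumvent.
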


Regarding values of $b$ that are closer to $1$ than in Theorem~\ref{thm:exoo}, Exoo provided the following:
\begin{theorem} [Exoo \cite{Exoo2005}]\ \\\label{thm:exoo_5}
	For $b\ge \frac{\sqrt{149}}{12} \approx 1.01721$ it holds $\chi(G_{[1,b]})\ge 5$.
\end{theorem}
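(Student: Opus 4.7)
The plan is to exhibit, for the critical value $b_0 = \sqrt{149}/12$, a finite point set $P \subset \mathbb{R}^2$ whose induced subgraph in $G_{[1,b_0]}$ is not $4$-colorable. Once established, the monotonicity $G_{[1,b]} \supseteq G_{[1,b_0]}$ for $b \ge b_0$ immediately yields $\chi(G_{[1,b]}) \ge 5$ throughout the stated range, so the entire content of the theorem reduces to designing one witnessing configuration.

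The configuration should be assembled from known $4$-chromatic unit-distance gadgets (e.g.\ Moser spindles or Golomb graphs), attached along shared vertices or overlapping regions, and placed at carefully chosen relative positions. The idea is that the mere presence of $4$-chromatic unit-distance subgraphs does not suffice, since the Moser spindle itself can be $4$-colored; the gain comes from the additional edges created by all pairs whose distance falls in the open interval $(1, b_0]$, which over-constrain any tentative $4$-coloring. The specific boundary value $b_0 = \sqrt{149}/12$ should emerge as the extremal ``non-unit'' distance in the construction: the largest distance between two points of $P$ that are not already at distance $1$, chosen so that increasing $b$ to $b_0$ flips exactly the critical pair of vertices into an adjacency and destroys $4$-colorability. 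The form $\sqrt{149}/12$ with $149 = 7^2 + 10^2 = 12^2 + 5$ strongly suggests searching among points with coordinates on a rational lattice of scale $1/12$, which narrows the search space.

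Verification of the lower bound would then proceed by exhibiting, on $P$, a reduced case analysis of $4$-colorings. Exploiting the automorphism group of $P$, one fixes the colors of a central gadget up to symmetry, propagates forced colors along the unit-distance edges, and in each branch identifies a pair in $P \times P$ at distance in $[1, b_0]$ assigned the same color. One must also verify two metric side conditions on $P$: that no two distinct points are at distance strictly less than $1$ (so the analysis is not disrupted by spurious non-edges), and that the distance $b_0$ is realized (so the threshold is tight).

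The main obstacle is the design of $P$ itself: engineering simultaneously (i) that every pair used to derive a contradiction falls into $[1, b_0]$, (ii) that no unintended short pair slips inside the interval, and (iii) that no $4$-coloring survives. This is a genuinely delicate combinatorial–geometric optimization, which in Exoo's original treatment is guided by computer search; a from-scratch human derivation would require an especially clean symmetric construction to make the non-colorability check tractable by hand.
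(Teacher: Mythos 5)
Your reduction to a finite witness is the right frame: by monotonicity of $G_{[1,b]}$ in $b$ it suffices to find one finite $P\subset\mathbb{R}^2$ whose induced subgraph in $G_{[1,\sqrt{149}/12]}$ is not $4$-colorable, and indeed this is how Exoo's bound (which the present paper only cites, without reproving) is obtained. But what you have written is a plan for a proof, not a proof: the entire mathematical content --- the explicit coordinates of $P$, the verification that the pairs used to derive a contradiction lie at distance in $[1,\sqrt{149}/12]$ with the threshold attained, and the demonstration that no $4$-coloring of the resulting graph exists --- is deferred to a search you do not carry out. You acknowledge this yourself in the final paragraph. Since the theorem is exactly the assertion that such a configuration exists, a proposal that stops short of exhibiting it (or of giving a non-constructive argument in its place) establishes nothing; there is no way to certify that a configuration with properties (i)--(iii) of your last paragraph exists without producing one. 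The observation that $149=7^2+10^2$ points toward a lattice of scale $1/12$ is a reasonable heuristic for where to look, but it is not a substitute for the construction.

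Two smaller remarks. First, your ``metric side condition'' that no two points of $P$ lie at distance strictly less than $1$ is not logically required for a lower bound: pairs at distance below $1$ are simply non-adjacent, and their presence cannot invalidate a correctly derived contradiction (it only matters that every pair you \emph{use} as an edge genuinely has distance in $[1,b_0]$). Second, for context: the bound you are asked to prove has long been superseded --- Theorem~\ref{thm:gryt_5} gives $\chi(G_{[1,b]})\ge 5$ for every $b>1$, and Theorem~\ref{thm:>=6} gives $\ge 6$ --- so if your goal were merely the truth of the statement rather than Exoo's original argument, you could instead invoke the topological/connectivity argument of Lemma~\ref{lem:3_colors_point} and Claim~\ref{claim:k+3}, which requires no finite configuration at all. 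As written, however, your attempt has a genuine gap at its core.
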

%He also provided also a small $\delta>1$ such that for any $b>1+\delta$, it holds that $\chi(G_{[1,b]})\ge 5$. Later, it was published in \cite{GrytczukSzaniawskiSokolWesek2016} that the latter statement can be strengthened to any $b>1$. 
Later, a strengthening of this statement to any $b>1$ was published.
\begin{theorem}[Grytczuk  \cite{GrytczukJunoszaSokolWesek2016}] \ \\ \label{thm:gryt_5}
	For $b>1$ it holds $\chi(G_{[1,b]})\ge 5$.
\end{theorem}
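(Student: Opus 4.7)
The plan is to argue by contradiction: suppose $c\colon\mathbb{R}^2\to\{1,2,3,4\}$ is a proper 4-coloring of $G_{[1,b]}$, set $\delta=b-1>0$, and derive a finite 5-chromatic subgraph. The crucial feature of $G_{[1,b]}$, absent in $G_{\{1\}}$, is that for every $\lambda\in[1,b]$ the graph contains a scaled copy of the unit distance graph with all edges of length $\lambda$; thus $c$ simultaneously restricts to a proper 4-coloring of an entire one-parameter family of unit distance embeddings. This one-parameter slack is the resource I want to exploit.

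My aim is to construct a ``same-color-forcing gadget'': a finite unit distance graph $F\subset\mathbb{R}^2$ together with two designated vertices $u,v$ at Euclidean distance $d\in(1,b]$ such that every proper 4-coloring of $F$ (viewed as a unit distance graph) satisfies $c(u)=c(v)$. Given such an $F$, the pair $uv$ is itself an edge of $G_{[1,b]}$ since $d\in[1,b]$, so any proper 4-coloring of $G_{[1,b]}$ must assign $c(u)\ne c(v)$ while simultaneously respecting the forcing $c(u)=c(v)$, a contradiction. I would build $F$ by combining several Moser spindles (see Figure~\ref{fig:moser_spindle}) glued along common vertices and rotated with respect to each other. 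In the Moser spindle any 3-coloring forces the two outer tips of each rhombus to share a color, and this rigidity can be amplified by chaining spindles to produce a genuine forcing at the level of 4-colorings; the relative rotation angles are then tuned so that $|uv|$ lands in $(1,b]$, which is feasible because the realizable distances vary continuously with the angle and $b>1$ opens a nontrivial window.

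The principal obstacle is making this construction uniform in $b$. As $\delta\to 0^+$ the admissible window for $|uv|$ shrinks, and the gadget must control edge lengths with increasing precision, presumably at the cost of many more interlocking spindles. Establishing a single construction that works for every $b>1$ is the combinatorial heart of the theorem and the step on which I would expect the subtlest work, perhaps requiring computer-aided verification or an algebraic parametrization of spindle rotations, in the same spirit as the earlier constructions of Huddleston \cite{OwingsTetivaHuddleston2008} and Exoo--Ismailescu \cite{ExooIsmailescu2018} for $G_{\{1,d\}}$ at specific values of $d$.
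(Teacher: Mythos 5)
Your reduction is logically sound as far as it goes: if a finite configuration $F$ with two designated vertices $u,v$ at distance $d\in(1,b]$ existed such that \emph{every} proper $4$-coloring of $F$ (as a subgraph of $G_{[1,b]}$) forces $c(u)=c(v)$, then $uv$ being an edge of $G_{[1,b]}$ would give the contradiction. The gap is that the entire proof rests on the existence of this gadget, and you do not construct it --- you only hope that chaining Moser spindles will ``amplify'' rigidity from $3$-colorings to $4$-colorings. That hope has no basis in the spindle itself: the spindle forces its two tips to agree only under $3$-colorings, while under $4$-colorings it admits colorings in which the tips differ, so there is no monochromatic pair to propagate and nothing to amplify. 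Building a finite unit-distance configuration that forces two vertices to share a color in every $4$-coloring is essentially the hard core of the post-2018 lower bounds for $\chi(G_{\{1\}})\ge 5$; it is not a routine step one can defer, and making it work \emph{uniformly for every} $b>1$ (where the admissible window $(1,b]$ shrinks to a point) is exactly where your sketch stops. A finite witness does in fact exist for every $b>1$ (Krebs~\cite{Krebs2021}), but it is obtained by a different mechanism, not by spindle chaining, so your outline cannot be regarded as a proof.

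For comparison, the paper does not reprove Theorem~\ref{thm:gryt_5} at all: it cites it and then supersedes it by Theorem~\ref{thm:>=6}, whose underlying argument (and the paper's own Lemma~\ref{lem:3_colors_point} together with Claim~\ref{claim:k+3}) is topological rather than finite. One shows that any proper coloring of $G_{[1,b]}$ has a point $x$ whose closed $\varepsilon$-ball already contains at least $3$ colors; the annulus $A_{b,\varepsilon}$ centered at $x$ can use none of these, and it visibly contains pairs of points at distance in $[1,b]$ (indeed it needs at least $3$ further colors), which yields $\chi(G_{[1,b]})\ge 6\ge 5$ for every $b>1$ in one stroke, with no gadget, no case analysis, and no dependence on how small $b-1$ is. If you want a proof of the stated theorem, that continuity/connectedness route is the one to follow; the finite-gadget route requires genuinely new constructions that your proposal does not supply.
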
%
However, it appears that years before the publication, Theorems~\ref{thm:exoo_5} and \ref{thm:gryt_5} were surpassed by a less known result obtained separately in two independent works.
In a series of papers by Brown, Dunfield, Perry \cite{DunfieldBrownPerryI, DunfieldBrownPerryII, DunfieldBrownPerryIII}, among other results, the authors gave an elegant proof by Dunfield that for any $b>1$ we have $\chi(G_{[1,b]})\ge 6$. The proof is using a result by Woodall (incorrect proof \cite{WOODALL1973}) and Townsend (correct proof based on a similar idea, see \cite{Soifer2009, Townsend1979, Townsend2005}).
Without giving the precise statement, the Woodall-Townsend theorem can be expressed in the following way: if the unit distance graph of the plane is colored with the condition that color classes are defined with Jordan curves, then at least $6$ colors are necessary. The key ingredient of the proof of the Woodall-Townsend theorem, very roughly speaking, is to find a point in the plane, which has at least $3$ colors in any $\varepsilon$-neighborhood. A related idea was used by Currie and Eggleton in their manuscript \cite{CurrieEggleton}, in which they (independently) prove the same result as Dunfield. Although the manuscript was not published, it was already mentioned by Currie in his other paper published in~1992 \cite{CURRIE1992}. Currie and Eggleton consider a proper coloring of $G_{[1,b]}$ and for $\varepsilon\in \big(0,(b-1)/2\big)$ find a point $x$ for which the closed $\varepsilon$-ball centered at $x$ contains at least $3$ colors. Then they prove that the annulus $\{p\in \mathbb{R}^2: 1+\varepsilon \le dist(p,x) \le b-\varepsilon\}$ needs at least $3$ colors and observe that it cannot use any of the colors from the closed $\varepsilon$-ball centered at $x$. This ends the proof. 
%Therefore the best known lower bound in case of $b$ close to $1$ is the following:
Therefore, let us state again the best known lower bound for $b$ close to $1$.
\begin{theorem}[Dunfield \cite{DunfieldBrownPerryI, DunfieldBrownPerryII, DunfieldBrownPerryIII}; Currie, Eggleton \cite{CurrieEggleton}] \ \\ %\label{thm:}
	For $b>1$ it holds $\chi(G_{[1,b]})\ge 6$.
\label{thm:>=6}
\end{theorem}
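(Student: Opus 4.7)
The plan is to assume for contradiction that $\chi(G_{[1,b]})\leq 5$, fix a hypothetical proper $5$-coloring $c$ of $G_{[1,b]}$, and pick $\varepsilon\in\bigl(0,(b-1)/2\bigr)$. I will then produce a contradiction by exhibiting a region of the plane that forces a sixth color.

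First, I would locate a point $x\in\mathbb{R}^2$ such that the closed ball $\overline{B}(x,\varepsilon)$ meets at least three color classes of $c$. This is the genuinely hard step. The natural approach is the topological argument behind the Woodall--Townsend theorem: if no such three-color point existed, then every point of $\mathbb{R}^2$ would have a small neighborhood seeing at most two colors, which forces sufficient tameness on the boundaries between color classes that the coloring essentially behaves like a partition into Jordan-curve regions. Since the Woodall--Townsend theorem rules out proper colorings of $G_{\{1\}}\subseteq G_{[1,b]}$ with fewer than $6$ such nicely shaped classes, this contradicts the assumption that $5$ colors suffice. I expect this step to absorb most of the work and to be the main obstacle.

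Second, once $x$ is in hand, consider the annulus
\[
A=\bigl\{p\in\mathbb{R}^2 : 1+\varepsilon \leq \|p-x\|\leq b-\varepsilon\bigr\}.
\]
For any $q\in\overline{B}(x,\varepsilon)$ and any $p\in A$, the triangle inequality gives $\|p-q\|\in[1,b]$, so $p$ and $q$ are adjacent in $G_{[1,b]}$ and therefore $c(p)\neq c(q)$. Consequently none of the at least three colors appearing in $\overline{B}(x,\varepsilon)$ can appear anywhere in $A$.

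Third, I would show that $G_{[1,b]}$ restricted to $A$ itself contains an odd cycle, so $A$ already demands at least $3$ colors. Here one can take, for instance, an equilateral triangle of side length $1$ whose vertices lie on the median circle $\|p-x\|=(1+b)/2$ of the annulus; shrinking $\varepsilon$ if necessary ensures all three vertices lie in $A$. Together with the previous step, this forces $c$ to use at least $3+3=6$ colors, contradicting our assumption and yielding $\chi(G_{[1,b]})\geq 6$.
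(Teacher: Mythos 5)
Your outline is exactly the Currie--Eggleton argument that the paper follows (find a point $x$ whose closed $\varepsilon$-ball sees three colors, observe the annulus $A_{b,\varepsilon}$ around $x$ is forbidden those three colors, then show the annulus itself needs three colors), but both of the substantive steps have genuine gaps. For the first step you only gesture at a reduction to the Woodall--Townsend theorem: the implication ``every point has an $\varepsilon$-neighborhood meeting at most two colors $\Rightarrow$ the color classes are Jordan-curve regions'' is precisely the part you would have to prove, and it is not established (color classes of an arbitrary coloring can be wild even under that hypothesis; moreover the paper points out that locating a three-color point is itself the key ingredient \emph{inside} the Woodall--Townsend proof, so the reduction is close to circular). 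The paper instead proves this lemma directly (Lemma~\ref{lem:3_colors_point}): assuming no three-color point, it grows monochromatic ``$\varepsilon$-connected'' components, surrounds each by a simple closed curve that must be monochromatic, and iterates outward so the diameters of these curves increase by at least $\varepsilon$ each time, eventually producing a monochromatic set of diameter exceeding $1$ --- a contradiction. You correctly identify this as the main obstacle, but you have not closed it.

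The third step contains a concrete error. An equilateral triangle of side $1$ has circumradius $1/\sqrt{3}\approx 0.577$, so its three vertices cannot all lie on the median circle of radius $(1+b)/2\ge 1$; three points on a circle of radius $R$ that are pairwise equidistant form the inscribed equilateral triangle of side $R\sqrt{3}>\sqrt 3>b$ for the relevant $b$. In fact, for $b$ close to $1$ the annulus contains \emph{no} triangle of $G_{[1,b]}$ at all: three points near the unit circle, pairwise at distance near $1$, force the third vertex to lie near the center or at distance near $\sqrt3$ from it, neither of which is in $A_{b,\varepsilon}$. The correct way to exhibit an odd cycle is a star polygon: choose an odd $n$ and an integer $k$ with $2r\sin(\pi k/n)\in[1,b]$ for some radius $r\in[1+\varepsilon,b-\varepsilon]$ (possible for every $b>1$ since the admissible angular window is a nonempty open interval and fractions $k/n$ with $n$ odd are dense), and take $n$ equally spaced points on that circle joined by steps of $k$ positions; this is an odd cycle of $G_{[1,b]}$ inside the annulus, and it, not a triangle, is what forces the third color there.
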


We mentioned that De Bruijn–Erdos Compactness Theorem \cite{BruijnErdos1951} implies the chromatic number of the plane is witnessed by a finite subgraph, assuming the axiom of choice. This statement is true also for $G_{[1,b]}$ for any $b>1$. Krebs~\cite{Krebs2021} recently presented a construction of a finite subgraph of $G_{[1,b]}$ with chromatic number at least $5$ for any $b>1$. Although the result does not yield any new bound, the value lays in constructive nature combined with straightforward human verifiability.

A particular case of graphs $G_{[1,b]}$, namely $G_{[1,2]}$, has a practical motivation in telecommunication networks. Namely, subgraphs of $G_{[1,2]}$ can model hidden conflicts in a radio network (e.g., mobile phone network \cite{WalczakWojciechowski2006}). Coloring of $G_{[1,2]}$ produces a schedule that can solve such conflicts. For this motivation, also the fractional variant of graph coloring can be useful or even more efficient. Fractional coloring of graphs $G_{[1,b]}$ was also investigated\cite{GrytczukJunoszaSokolWesek2016}.

\subsection{Our approach}

It seems that the idea of a point close to at least $3$ colors was not exploited for larger values of $b$. 
In our work, we use this concept to provide new lower bounds for $\chi(G_{[1,b]})$ for certain values of $b>1$. The approach consists of two steps. 
First, we use the mentioned fact that any proper coloring of $G_{[1,b]}$ for any $b>1$ and any sufficiently small $\varepsilon>0$ admits a closed $\varepsilon$-ball centered at some point $x$ containing at least $3$ colors. We give a new proof of this statement. Without loss of generality, we can assume that $x=(0,0)$. As the second step, we consider the annulus $A_{b,\varepsilon}$ centered at $(0,0)$ with the inner radius $1+\varepsilon$ and the outer radius $b-\varepsilon$. Clearly, none of at least $3$ colors found in the closed $\varepsilon$-ball centered at $(0,0)$ can be used in $A_{b,\varepsilon}$. If for some $k$ we are able to prove that $A_{b,\varepsilon}$ itself requires at least $k$ colors, then we obtain $\chi(G_{[1,b]})\ge k+3$.

In order to show a lower bound for proper coloring of $G_{[1,b]}$ or a subgraph of $G_{[1,b]}$, one may try to construct a finite set of points for which finite graph coloring techniques can be applied. Many mentioned lower bounds, including de Grey's result, fit this scheme.
For example Exoo, in order to prove the lower bound in Theorem~\ref{thm:exoo}, considered proper coloring of a set $P$ of vertices of a bounded part a~carefully chosen regular triangular grid. 
Using computer-aided calculations, he showed that for the specified range of $b$ the subgraph of $G_{[1,b]}$ induced by $P$ requires at least $7$ colors. However, it is unlikely that his choice of parameters for the grid is optimal (in terms of the range of $b$), as it is limited by the computer computational power.
In order to show a lower bound for proper colorings of $A_{b,\varepsilon}$, we also construct a certain finite subset of it. Our analysis suggested that it is reasonable to consider sets created by taking a number of points regularly placed on a small number of circles of radius chosen between $1+\varepsilon$ and $b-\varepsilon$. 
%This plan proves itself to be effective, as in particular, we were able to improve Theorem~\ref{thm:exoo} in the meaning that we give a more general lower bound.
%for which we can show a noncolorability statement. 
%It is likely that for many values of $b$ the chromatic number of the chosen annulus plus $3$ does not equal $\chi(G_{[1,b]})$.

The benefit of this approach is that we reduce the search for finite configurations to a relatively small part of the plane. %On the other hand, it provides only a one-way implication. 
On the other hand, it is likely that for many values of $b$ 
the~chromatic number of the~subgraph of $G_{[1,b]}$ induced by $A_{b,\varepsilon}$ plus $3$
is strictly smaller than 
$\chi(G_{[1,b]})$. However, this plan proves itself to be effective in providing a new contribution, as we were able to determine $\chi(G_{[1,b]})$ for two intervals of values of $b$.
Namely, we enlarge the interval given in Theorem~\ref{thm:exoo} (by providing a more general lower bound) and present a completely new interval for which $9$ colors are optimal. The results are presented in Section~\ref{sec:lower}.
Our lower bounds on the number of colors for finite configurations are obtained by computer-based computations.
%slightly modified 
We used one of the standard integer linear programming formulations of graph coloring.

As a byproduct of the method from Section~\ref{sec:lower}, a natural question arises: what is the~chromatic number of the~subgraph of $G_{[1,b]}$ induced by $A_{b,\varepsilon}$? Since Section~\ref{sec:lower} makes use of lower bounds, can we say something about upper bounds for such graphs? The topic of coloring bounded parts of the plane is discussed in Section~\ref{sec:annuli}.
%As a byproduct of such considerations, we touch the topic of coloring bounded parts of the plane, specifically by coloring certain annuli. This topic is discussed in Section~\ref{sec:annuli}.

Moreover, in Section~\ref{sec:upper} we present colorings of distance graph $G_{[1,b]}$ for various values of $b$ establishing some upper bounds. In particular we present coloring with $8$-colors for $b$ slightly bigger than $\sqrt{7}/2$. We also present a scheme for larger values of parameter $b$ that generalize and improves colorings based on hexagon tiling by Exoo \cite{Exoo2005} and Lonc \cite{lonc}.

\section{Preliminaries}
\label{sec:preliminaries}

First, we shall give some fundamental graph theoretical definitions. A \emph{graph} is a pair $G=(V,E)$ where $V$ is an arbitrary set and $E\subseteq \{\{x,y\}: x,y\in V\}$. Elements of $V$ are called \emph{vertices} and elements of $E$ are called \emph{edges}. For short, we often write $xy$ for an edge $\{x,y\}$. If $xy \in E$, then we say that $x$ and $y$ are \emph{adjacent} in $G$. In this thesis, we will consider graphs with finite and infinite sets of vertices. 
\begin{definition}
A \emph{coloring} of a graph $G=(V,E)$ is a function $c: V\rightarrow K$ (where $K$ is an arbitrary set of \emph{colors}) such that any $xy \in E$ satisfies $c(x)\neq c(y)$. We say $c$ is a \emph{proper $k$-coloring} if $\lvert K\rvert=k$, for finite $k$.
For a (non necessarily finite) graph $G$, \emph{the chromatic number of $G$} is defined by $$\chi(G) = \inf \{\lvert K\lvert : \text{a proper coloring of }G\textrm{ into }K\text{ exists}\}.$$ Note that for a finite graph, it is the minimal number of colors for a proper coloring.
\end{definition}

\begin{definition}
A graph $G_{[a,b]}$ is a graph whose vertices are all the point of the plane $V=\mathbb{R}^2$, in which two point are adjacent if their distance $d$ satisfies $a \leq d \leq b$.\\ 
 $G_{[a,b]}=(\mathbb{R}^2, \{ \{x,y\}\subset \mathbb{R}^2\ :\ a\leq \dist(x,y)\leq b  \}$
\end{definition}

For $b>1$ and $\varepsilon\ge 0$, let us formally define a class of annuli that we will use $$A_{b,\varepsilon}=\{p\in \mathbb{R}^2: 1+\varepsilon \le dist(p,(0,0)) \le b-\varepsilon\}.$$

\section{Lower bounds for the chromatic number of $G_{[1,b]}$}
\label{sec:lower}

We start with a key lemma already proved in \cite{CurrieEggleton}. However, we give a different, shorter proof of~this~fact (while the core idea is similar). For $\varepsilon>0$, by a \emph{closed $\varepsilon$-ball} centered in a point $x$ we understand the set $\{p\in \mathbb{R}^2: dist(p,x)\le \varepsilon\}$.

\begin{lemma}[\hspace{-0.1cm} \cite{CurrieEggleton}; a different proof] \label{lem:3_colors_point} \ \\
	Let $c$ be a proper coloring of $G_{[1,b]}$ for $b>1$. Consider any $\varepsilon$ satisfying $0<\varepsilon<b-1$. Then there exists a point $x$ in $\mathbb{R}^2$ such that in the closed $\varepsilon$-ball centered in $x$ there are at least $3$ colors (with respect to $c$).
\end{lemma}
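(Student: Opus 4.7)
\emph{Proof plan.} I would prove this by contradiction. Suppose there exists a proper coloring $c$ of $G_{[1,b]}$ and some $\varepsilon \in (0, b-1)$ such that every closed $\varepsilon$-ball contains at most two colors. Without loss of generality we may take $\varepsilon \in (0, (b-1)/2)$: shrinking $\varepsilon$ only strengthens the hypothesis, since every closed $\varepsilon'$-ball for $\varepsilon' < \varepsilon$ sits inside a closed $\varepsilon$-ball.

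The first step sets up the annular structure behind the argument. Not every closed $\varepsilon$-ball can be monochromatic, for otherwise $c$ would be locally constant on $\mathbb{R}^2$ and hence globally constant by connectedness, contradicting that $c$ properly colors $G_{[1,b]}$. So fix a point $x_0$ whose closed $\varepsilon$-ball uses exactly two colors $\{c_1, c_2\}$. A direct triangle-inequality calculation (using $\varepsilon < (b-1)/2$) shows that every point $p$ with $\dist(p,x_0) \in [1+\varepsilon,\, b-\varepsilon]$ is adjacent in $G_{[1,b]}$ to every element of the closed $\varepsilon$-ball around $x_0$; hence $c(p) \notin \{c_1, c_2\}$. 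In particular, the whole annulus $A_{b,\varepsilon}$ centered at $x_0$ is colored using only colors outside $\{c_1, c_2\}$.

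The second and crucial step is to derive a contradiction from the coloring of $A_{b,\varepsilon}$ under the two-colors-per-ball hypothesis. My plan is to pick a circle $C$ of radius $r \in (1+\varepsilon, b-\varepsilon)$ centered at $x_0$ (so $C \subset A_{b,\varepsilon}$) and to sample it by points $p_0, p_1, \ldots, p_{N-1}$ with consecutive planar distance at most $\varepsilon$. For each $i$ the three points $p_{i-1}, p_i, p_{i+1}$ all lie in the closed $\varepsilon$-ball around $p_i$, so by hypothesis the colors $c(p_{i-1}), c(p_i), c(p_{i+1})$ take at most two distinct values. This ``no three distinct consecutive colors'' rule forces the color sequence around $C$ to be a concatenation of runs of constant color, with every color change involving only two colors at a time. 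Combined with the $G_{[1,b]}$-adjacencies on $C$ (pairs at planar distance in $[1,b]$ must receive different colors) and with the fact that all colors on $C$ lie outside $\{c_1,c_2\}$, this should produce a contradiction: either some run spans angular width at least $2\arcsin\!\big(1/(2r)\big)$ and hence contains a monochromatic pair at forbidden distance, or some transition between runs is forced to introduce a third color into a common $\varepsilon$-ball.

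The main obstacle is making this final combinatorial step airtight. The three-window constraint alone does not bound the total number of colors used along $C$, so a careful analysis of the interplay between run lengths, transition patterns, and chord adjacencies is needed; I would likely combine this with an iteration of the annulus construction (applying the same reasoning to a second two-colored ball located inside $A_{b,\varepsilon}$) to exhaust the color palette and close the contradiction.
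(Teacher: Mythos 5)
There is a genuine gap, and it sits exactly where you flag it: your second step is not merely ``not airtight'' --- both devices you propose for closing it would fail. The single-circle argument cannot produce a contradiction for $b$ close to $1$, because all the constraints you impose on $C$ are simultaneously satisfiable: partition the annulus around $x_0$ into nine congruent sectors of diameter less than $1$ and color them cyclically with three colors $c_3,c_4,c_5$ disjoint from $\{c_1,c_2\}$. This is a proper coloring of the annulus in $G_{[1,b]}$ for every $b\le\sqrt{2-2\sin(\pi/18)}$ (it is precisely the radial coloring used in Proposition~\ref{prop:col_of_annuli}), it avoids $\{c_1,c_2\}$, and every closed $\varepsilon$-ball meets at most two of its colors, since a ball of diameter $2\varepsilon<2(b-1)$ can straddle at most one sector boundary. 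Hence ``no three distinct colors in a window'' together with the chord adjacencies on $C$ and the exclusion of $\{c_1,c_2\}$ leads nowhere. The fallback of iterating the annulus construction ``to exhaust the color palette'' also cannot work: the lemma quantifies over arbitrary proper colorings, whose palette may be infinite, so excluding finitely many colors per iteration never terminates in a contradiction. The contradiction has to be geometric, not a counting one.

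The missing idea --- and the heart of both the paper's proof and Currie--Eggleton's --- is to grow an unboundedly large monochromatic object. Under your hypothesis, take the $\varepsilon$-chain monochromatic component $S_1$ of a point (all points of the same color reachable by same-colored chains with steps at most $\varepsilon$); it is bounded, since an unbounded chain with steps shorter than $b-1$ would realize a same-colored pair at distance in $[1,b]$. Surround it by a simple closed curve $C_1$ lying at distance between $\varepsilon/2$ and $\varepsilon$ from $S_1$; if $C_1$ carried two colors, two nearby differently colored points of $C_1$ together with a nearby point of $S_1$ of a third color would put three colors in one closed $\varepsilon$-ball, so $C_1$ is monochromatic. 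Iterating with $S_i=S(C_{i-1})$ yields monochromatic simple closed curves whose diameters grow by at least $\varepsilon$ per step, and once the diameter exceeds $1$ such a curve contains two equally colored points at a forbidden distance. Your first step (locating a two-colored ball and excluding its colors from the surrounding annulus) is correct, but it is essentially Claim~\ref{claim:k+3} of the paper, which is applied after the lemma rather than being a step of its proof.
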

\begin{proof}
	Take a proper coloring $c$ of $G_{[1,b]}$ for $b>1$ and fix $\varepsilon$ satisfying $0<\varepsilon<b-1$.
	%For any monochromatic set $A$, suppose it is colored with a color $c_1$, denote by $S(A)$ the~set of all $c_1$-colored points of $\mathbb{R}^2$ that can be obtained from $A$ by a sequence of $c_1$-colored points with consecutive distances at most $\varepsilon$. 
	%For any nonempty monochromatic set $A$, that is, $\emptyset \neq A \subseteq c^{-1}(c_1)$ for some color $c_1$, denote by~$S(A)$ the~set of all $c_1$-colored points of $\mathbb{R}^2$ that can be obtained from $A$ by a sequence of $c_1$-colored points with consecutive distances at most $\varepsilon$. 
	For any nonempty monochromatic set $A \subseteq \mathbb{R}^2$, denote by $S(A)$ the following set: 
	%$A$ is colored with a color $c_1$, 
	with $c_1$ being the color~of~$A$, take the~set of all $c_1$-colored points of $\mathbb{R}^2$ that can be obtained from $A$ by a sequence of $c_1$-colored points with consecutive distances at most $\varepsilon$. 
	%For a coloring of $\mathbb{R}^2$ and a monochromatic set $A$ colored with a color $c_1$, denote by $S(A)$ the~set of all $c_1$-colored points of $\mathbb{R}^2$ that can be obtained from $A$ by a sequence of $c_1$-colored points with consecutive distances at most $\varepsilon$. 
	%Let $H_{\varepsilon}(S,X)$ be the $\varepsilon$-hull in a set $X$ of a set $S$.
	For $S \subseteq\mathbb{R}^2$, let $H_{\varepsilon}(S) = \{p\in \mathbb{R}^2: \exists_{s \in S} \; \; dist(p,s)\le \varepsilon\}$. In the proof, we will use the following observation.
	\begin{description}
		\item[($\ast$)] If $S$ is a bounded, connected set, then there exists a simple closed curve $C$ in $H_{\varepsilon}(S)\setminus S$ so that all points from $S$ are inside $C$.
	\end{description}
	
	%For a simple closed curve $C$, let $In(C)$ be the bounded component of $\mathbb{R}^2\setminus C$.
	Suppose the contrary to the thesis, that there is no point $x$ as in the lemma formulation. Take any point $y\in \mathbb{R}^2$. Set $S_1=S(\{y\})$. Note that $S_1$ is a bounded set. Otherwise, it would contain a sequence of points of the same color with consecutive distances less than $\varepsilon$ and realizing an arbitrarily large distance. Hence $S_1$ would contain a pair of points at distance in $[1,b]$, a contradiction. 
	Since 
	%$H_{\varepsilon}(S_1,X_1)= H_{\varepsilon/2}(H_{\varepsilon/2}(S_1,X_1),X_1)$ and 
	$H_{\varepsilon/2}(S_1)$ is bounded and connected, by ($\ast$) we can find a simple closed curve $C_1$ in $H_{\varepsilon}(S_1)\setminus H_{\varepsilon/2}(S_1) = H_{\varepsilon/2}(H_{\varepsilon/2}(S_1)) \setminus H_{\varepsilon/2}(S_1)$ so that all points from $H_{\varepsilon/2}(S_1)$ are inside $C_1$. 
	%Thus all points from $S_1$ are inside $C_1$.
	%Thus we can find a continuous simple curve $C_1$ in $H_{\varepsilon}(S_1,X_1)-H_{\varepsilon/2}(S_1,X_1)$ so that all points from $S_1$ are inside $C_1$.
	Let us show that all points of $C_1$ have the same color.
	%as otherwise we would have a closed $\varepsilon$-ball with $3$ colors.e
	Suppose that there are at least $2$ colors in $C_1$. Then there exists a pair of points at distance smaller than $\varepsilon/2$. Since points of $C_1$ are $\varepsilon/2$-close to a vertex colored with color of $y$ (distinct from colors in $C_1$), hence we have a closed $\varepsilon$-ball with $3$ colors, a contradiction. Thus only one color is used in $C_1$.
	We will iteratively construct $S_i,C_i$ for $i>1$ in the following way.
	%We continue the construction for $i>1$ in the following way.
	Take $i>1$ and set $S_i=S(C_{i-1})$. %and  $T_i=(C_{i-1},X_{i-1})$.
	Since 
	$H_{\varepsilon/2}(S_i)$ is bounded and connected, by ($\ast$) we can find a simple closed curve $C_i$ in $H_{\varepsilon}(S_i)\setminus H_{\varepsilon/2}(S_i) = H_{\varepsilon/2}(H_{\varepsilon/2}(S_i)) \setminus H_{\varepsilon/2}(S_i)$  %so that all points from $S_i$ are inside $C_i$.
	so that all points from $H_{\varepsilon/2}(S_i)$ are inside $C_i$. 
	%Thus all points from $S_i$ are inside $C_i$.
	Again, all points of $C_i$ have the same color, as otherwise, we would have a closed $\varepsilon$-ball with $3$ colors.
	
	%Suppose to the contrary that there is no such point $x$. Let $X_1=\mathbb{R}^2$. Take any point $y\in \mathbb{R}^2$. Set $S_1=S(\{y\},X)$ and  $T_1=T(\{y\},X)$. $S_1$ is a bounded set, as otherwise it would contain a pair of adjacent vertices. We can find a continuous simple curve $C_1$ in $T_1-S_1$ outside of $\varepsilon/2$-hull of $S_1$ so that all points from $S_1$ are inside $C_1$. Observe that all points of $C_1$ have the same color, as otherwise we would have an $\varepsilon$-ball with $3$ colors.
	%We continue the contraction in the following way: We set $X_i=X_{i-1}-IN(C_{i-1})$, $S_i=S(C_{i-1},X_{i-1})$ and  $T_i=(C_{i-1},X_{i-1})$. We can find a continuous simple curve $C_i$ in $T_i-S_i$ outside of $\varepsilon/2$-hull of $S_i$ so that all points from $S_i$ are inside $C_i$. Again, all points of $C_1$ have the same color, as otherwise we would have an $\varepsilon$-ball with $3$ colors. 
	
	We claim that $diam(C_i)-diam(C_{i-1})\ge \varepsilon$ for $i>1$. For a simple closed curve $C$, let $In(C)$ stand for the bounded component of $\mathbb{R}^2\setminus C$. Consider two points $y_1,y_2$ that realize $diam(C_{i-1})$ and take the line $\ell$ containing $y_1,y_2$. Let $y_1', y_2'$ be the points from $\ell$ satisfying $dist(y_1',y_1)=\varepsilon/2$, $dist(y_2',y_2)=\varepsilon/2$ and $dist(y_1',y_2')=dist(y_1,y_2)+\varepsilon$. Clearly, $y_1', y_2' \in H_{\varepsilon/2}(S_i,X_i) \subseteq In(C_{i})$ and hence $diam(C_i) \ge dist(y_1',y_2')$, as claimed. Thus $diam(C_i)-diam(C_{i-1})\ge \varepsilon$.
	%Take the normalized vector $\textbf{v}$ of the vector $y_1 - y_2$. Let $y1' = y_1 - \varepsilon\cdot \textbf{v}$ and $y_2'$.
	Therefore for sufficiently large $i$ we have $diam(C_{i})>1$ and there are two points in $C_{i}$ at distance from $[1,b]$. On the other hand, all points $C_{i}$ have the same color, which contradicts with the fact that $c$ is a proper coloring of $G_{[1,b]}$.

\end{proof}

%Let us define $3$ useful values

The proof of the following Theorem is partially computer aided. We use mixed integer programming solver to compute the coloring of some graphs. Coloring is modeled in a standard way: for a given graph $G$ with $V(G)=\{1,\ldots,n\}$ and a number of colors $K$. We want to check if $G$ is $K$-colorable. It is equivalent to the feasibility of the following integer linear program. For $i\in 1,\ldots,n$ and $k\in\{1,\ldots, K\}$, we introduce a binary variable $x_{i,k}$ indicating if vertex $i$ receives color $k$. The problem itself is a decision problem and there is no inherent objective function. However, it is useful to introduce an objective function using additional variables in order to break some symmetries of the model (see for example \cite{WilliamsYan2001}). Hence for $k\in\{1,\ldots, K\}$, we introduce a binary variable $y_k$ indicating if color $k$ is used.

\begin{equation*}
\begin{array}{ll@{}ll}
\text{minimize}  & & \displaystyle\sum\limits_{k=1}^{K} k \cdot y_k &\\
\text{subject to}& & \displaystyle\sum\limits_{k=1}^{K}   x_{i,k} \geq 1,  &i=1 ,\ldots, n\\
&   &x_{i,k} + x_{j,k} \leq 1,  &(i,j)\in E(G), \; k=1,\ldots,K\\
&   &x_{i,k} - y_k  \leq 0,  &i=1 ,\ldots, n, \; k=1,\ldots,K\\
&                                                &x_{i,k}, \: y_k \in \{0,1\}, &i=1 ,\ldots, n, \; k=1,\ldots,K
\end{array}
\end{equation*}

Now we are ready for the proof of the main theorem of this chapter.
\begin{theorem}\label{thm:main}
	The following inequalities hold:
	\begin{enumerate}
		\item $\chi(G_{[1,b]})\ge 7$ for $b> \sqrt{2-2 \sin(\frac{18 \pi}{325})} \approx 1.28599$
		\item $\chi(G_{[1,b]})\ge 8$ for $b> \sqrt{2+2\sin(\frac{\pi}{38})}  \approx 1.47145$
		\item $\chi(G_{[1,b]})\ge 9$ for $b> \sqrt{2+2\sin(\frac{7\pi}{45})} \approx 1.71433$
		\item $\chi(G_{[1,b]})\ge 10$ for $b> 2\sqrt{2}-1 \approx 1.82843$
		\item $\chi(G_{[1,b]})\ge 11$ for $b> \frac{1}{3}(5 - \sqrt{2} + \sqrt{6}) \approx 2.01176$
	\end{enumerate}
\end{theorem}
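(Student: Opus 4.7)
The overall strategy follows the two-step template introduced in Section~\ref{sec:lower}: combine the three-colors-in-a-ball lemma with a coloring lower bound on a thin annulus around the origin.

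Step 1 (reduction). Fix $b>1$ as in one of the five cases and choose $\varepsilon \in (0, b-1)$ sufficiently small. Apply Lemma \ref{lem:3_colors_point} to produce a point $x$ whose closed $\varepsilon$-ball $B$ meets at least three colors under an arbitrary proper coloring $c$ of $G_{[1,b]}$. After translation we may assume $x=(0,0)$. The annulus $A_{b,\varepsilon}$ is defined so that every point of $A_{b,\varepsilon}$ is at distance between $1$ and $b$ from every point of $B$; hence none of the three colors witnessed inside $B$ can be reused inside $A_{b,\varepsilon}$. Therefore it suffices to construct, for $\varepsilon$ small enough, a finite set $P_b \subseteq A_{b,\varepsilon}$ whose induced subgraph in $G_{[1,b]}$ has chromatic number at least $k$; this gives $\chi(G_{[1,b]}) \ge k+3$, with $k=4,5,6,7,8$ for the five respective statements.

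Step 2 (finite configurations). For each case I would pick $P_b$ as the union of a small number of concentric circles around the origin, each carrying a regularly spaced cluster of points. Radii of the circles are selected strictly inside $(1+\varepsilon, b-\varepsilon)$, and the number of points on each circle is chosen so that the relevant pairwise distances — namely those chord lengths $\sqrt{r_i^2+r_j^2-2r_ir_j\cos\theta}$ between points on circles of radii $r_i,r_j$ separated by an angular offset $\theta$ — are forced to lie in $[1,b]$. The algebraic threshold values displayed in the theorem are precisely the critical chord lengths that arise this way: for instance $\sqrt{2\pm 2\sin\phi} = 2\cos(\pi/4\mp\phi/2)$ is the chord of a unit circle at angle $\pi/2\mp\phi$, and the denominators $325,\,38,\,45$ hint at equispaced points at multiples of $\pi/325$, $\pi/38$, $\pi/45$, while $2\sqrt{2}-1$ reflects a combinatorially clean two-circle configuration. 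In each case $\varepsilon$ is chosen after $P_b$ so that the chosen radii lie in $(1+\varepsilon,b-\varepsilon)$ and every targeted pair has distance in $[1,b]$.

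Step 3 (computer verification). For each configuration $P_b$, I would construct the edge set of the induced subgraph of $G_{[1,b]}$ and feed it to the integer linear program displayed above with $K=k+2$. Infeasibility of the ILP certifies $\chi(P_b)\ge k+2+1 = k+3$... more precisely we feed $K=k-1$ and require infeasibility, which proves that $P_b$ needs at least $k$ colors, so $\chi(G_{[1,b]}) \ge k+3$. Repeating this for the five choices of $k$ yields the five inequalities.

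The main obstacle is the combinatorial search for the right $P_b$: one needs a finite graph realisable as a subgraph of $G_{[1,b]}\cap A_{b,\varepsilon}$ whose chromatic number is large but which still fits geometrically into a thin annulus. The algebraic thresholds in the theorem are not a priori obvious — they are the smallest values of $b$ for which the chosen $P_b$ remains realisable with all marked pairs at distance in $[1,b]$, and only the smallness of the annulus (together with ILP verification) guarantees that the chromatic-number bound on $P_b$ is tight enough. Pure pen-and-paper verification of non-$(k-1)$-colorability is not expected; the ILP stands in for this, with the symmetry-breaking objective helping keep the solve tractable.
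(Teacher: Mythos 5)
Your proposal follows essentially the same route as the paper: Lemma~\ref{lem:3_colors_point} reserves three colors inside the closed $\varepsilon$-ball, the annulus $A_{b,\varepsilon}$ (whose points are at distance in $[1,b]$ from every point of that ball) excludes them, and a finite set of points evenly spaced on two or three concentric circles of radii $1+\varepsilon$, $(1+b)/2$, $b-\varepsilon$ (with $1300$, $190$, $180$ or $120$ points per circle in the respective cases) is certified by the ILP to require $k\in\{4,\dots,8\}$ colors, giving $\chi(G_{[1,b]})\ge k+3$. Your self-corrected certificate (infeasibility at $K=k-1$) is the right one; the only content missing from your sketch is the explicit list of configurations and threshold computations, which, exactly as in the paper, must ultimately be validated by running the solver.
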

\begin{proof} 
	
	A key step of the proof is encapsulated in the following claim:
	\begin{claim}\label{claim:k+3}
		Let $b>1$. If $G_{[1,b]}[A_{b,\varepsilon}]$ for some $\varepsilon>0$ requires at least $k$ colors, then $\chi(G_{[1,b]})\ge k+3$.
	\end{claim}
	Let us prove Claim~\ref{claim:k+3}. Consider $b>1$ and a proper coloring $c$ of $G_{[1,b]}$. Fix $\varepsilon>0$. Let $x$ be a point obtained from Lemma~\ref{lem:3_colors_point}: such that in the closed $\varepsilon$-ball centered at $x$ there are at least $3$ colors with respect to $c$, say colors $1,2,3$. %Without loss of generality we can assume that $x=(0,0)$. 
	%Then shift $A_{b,\varepsilon}$ to have the center in $x$, obtaining $A'_{b,\varepsilon}$. 
	Without loss of generality, we can assume $x=(0,0)$, as the coloring can be shifted.
	No point in $A_{b,\varepsilon}$ can be colored with any of the colors $1,2,3$. Hence $c$ uses at least $k+3$ colors. It follows that $\chi(G_{[1,b]})\ge k+3$, which concludes the proof of Claim~\ref{claim:k+3}.
	
	In order to make use of Claim~\ref{claim:k+3}, in each case, we constructed a finite subset of $A_{b,\varepsilon}$ such that, for sufficiently small $\varepsilon>0$, it induces a graph requiring at least $k$ colors in $G_{[1,b]}$ (for some $k$). In other words, we found a subgraph of $G_{[1,b]}$ consisting of vertices from $A_{b,\varepsilon}$ forcing $k$ colors. In each case, we formulated the coloring problem as a mixed integer programming instance and used a computer to show infeasibility for any number of colors smaller than the presented bound. Denote by $X^n_r$ the set consisting of $n$ points evenly distributed on the circle with the center in $x$ and radius $r$ so that one of the points lays on the upward vertical half-line from $x$.
	
%	---------------------\\
%	Consider $b>1$ and a proper coloring $c$ of $G_{[1,b]}$ with $\chi(G_{[1,b]})$ colors, say $1,\ldots,\chi(G_{[1,b]})$. Fix $\varepsilon>0$. Let $x$ be a point such that in the closed $\varepsilon$-ball centered at $x$ there are at least $3$ colors with respect to $c$, say colors $1,2,3$. %Without loss of generality we can assume that $x=(0,0)$. 
%	No point in the annulus $A_{b,\varepsilon}$ can be colored with any of the colors $1,2,3$.
%	
%	The general outline is that in each case for a fixed $b$, we constructed a finite subset of $A_{b,\varepsilon}$ (for sufficiently small $\varepsilon$) such that it needs at least $k$ colors, for some $k$. In other words, we found a subgraph of $G_{[1,b]}$ consisting of vertices from $A_{b,\varepsilon}$ forcing $k$ colors. This implies that $\chi(G_{[1,b]}) \ge 3+k$.
%	In each case, we formulated the coloring problem as a mixed integer programming instance and used a computer to show infeasibility for any number of colors smaller than the given bound.
%	Denote by $X^n_r$ the set consisting of $n$ points evenly distributed on the circle with the center in $x$ and radius $r$ so that one of the points lays on the upward vertical half-line from $x$.
%	\\ ---------------------
	\begin{enumerate}
		\item Assume that $b>\sqrt{2-2 \sin(\frac{18 \pi}{325})}$. Consider $Y_{b,\varepsilon} = X^{1300}_{1+\varepsilon}\cup X^{1300}_{b-\varepsilon} \subseteq A_{b,\varepsilon}$. We checked that for sufficiently small $\varepsilon>0$ any proper coloring of the graph induced by $Y_\varepsilon$ in $G_{[1,b]}$ requires at least $4$ colors.
		
		\item Assume that $b>\sqrt{2+2\sin(\frac{\pi}{38})}$. Consider $Y_{b,\varepsilon} = X^{190}_{1+\varepsilon}\cup X^{190}_{b-\varepsilon} \subseteq A_{b,\varepsilon}$. We checked that for sufficiently small $\varepsilon>0$ any proper coloring of the graph induced by $Y_\varepsilon$ in $G_{[1,b]}$ requires at least $5$ colors.
		
		\item Assume that $b>\sqrt{2+2\sin(\frac{7\pi}{45})}$. Consider $Y_{b,\varepsilon} = X^{180}_{1+\varepsilon}\cup X^{180}_{(1+b)/2}\cup X^{180}_{b-\varepsilon} \subseteq A_{b,\varepsilon}$. We checked that for sufficiently small $\varepsilon>0$ any proper coloring of the graph induced by $Y_\varepsilon$ in $G_{[1,b]}$ requires at least $6$ colors.
		
		\item Assume that $b>2\sqrt{2}-1$. Consider $Y_{b,\varepsilon} = X^{120}_{1+\varepsilon}\cup X^{120}_{(1+b)/2}\cup X^{120}_{b-\varepsilon} \subseteq A_{b,\varepsilon}$. We checked that for sufficiently small $\varepsilon>0$ any proper coloring of the graph induced by $Y_\varepsilon$ in $G_{[1,b]}$ requires at least $7$ colors.
		
		\item Assume that $b> \frac{1}{3}(5 - \sqrt{2} + \sqrt{6})$. Consider $Y_{b,\varepsilon} = X^{120}_{1+\varepsilon}\cup X^{120}_{(1+b)/2}\cup X^{120}_{b-\varepsilon} \subseteq A_{b,\varepsilon}$. We checked that for sufficiently small $\varepsilon>0$ any proper coloring of the graph induced by $Y_\varepsilon$ in $G_{[1,b]}$ requires at least $8$ colors.
	\end{enumerate}

	As promised before, Claim~\ref{lem:3_colors_point} concludes the proof in each case.
\end{proof}

The exact right-hand side values in the inequalities on $b$ in Theorem~\ref{thm:main} are the optimal values for which the given finite configurations of points possess the desired chromatic properties. That is, in each case for any smaller value of $b$ and any small value of $\varepsilon>0$ (only small values of $\varepsilon>0$ were checked), the given set $Y_{b,\varepsilon}$ can be colored in $G_{[1,b]}$ with fewer colors than stated - hence we cannot use Claim~\ref{lem:3_colors_point} for the same lower bound. Nevertheless, we are far from claiming optimality of the given sets. In Theorem~\ref{thm:main} we simply present the best constructions that we were able to find and verify. 
We expect that there exist sets of similar forms which work for smaller values~of~$b$ in respective cases. We will shed some light on the range of possible improvements later.
%We expect that there exist sets of similar forms which work for (possibly only slightly) smaller values~of~$b$.

%There probably exist sets of similar form which work for (possibly only slightly) smaller values~of~$b$.

%\begin{theorem}\cite{Ivanov2007}\label{thm:ivanov}\end{theorem}

By combining Theorem~\ref{thm:main} with previously known bounds, we can obtain two intervals of values of $b$ for which the chromatic number can be determined. Namely, let us use that Exoo \cite{Exoo2005} observed that $\chi(G_{[1,b]})\le 7$ for $b\le\sqrt{7}/2$ and Ivanov \cite{Ivanov2007} showed that $\chi(G_{[1,b]})\le 9$ for $b\le\sqrt{3}$ (see Figure~\ref{fig:9col}).

\begin{figure}[h]
	\begin{center}
		\includegraphics[width=0.8\textwidth]{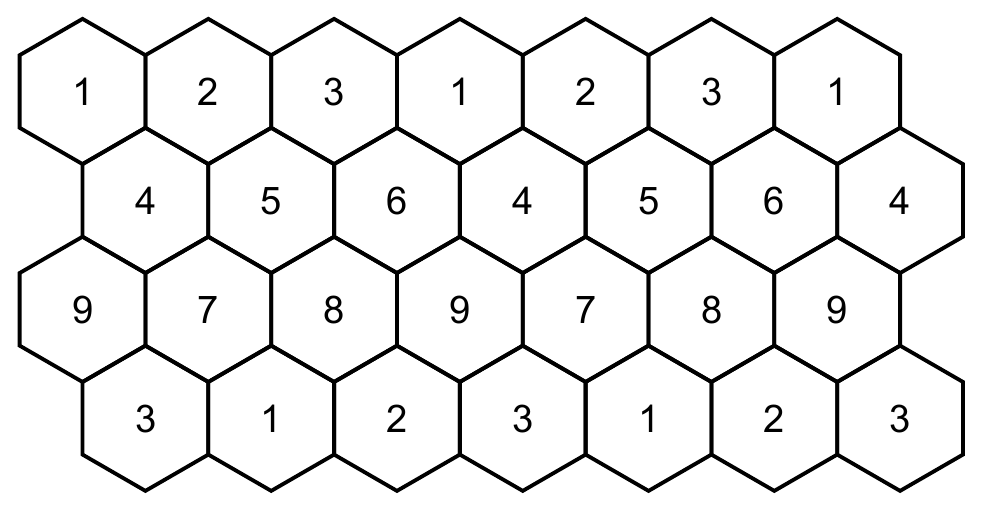}
		\caption{The coloring scheme for a proper $9$-coloring of the plane based on hexagons of diameter~$1$, by Ivanov \cite{Ivanov2007}. Color assignment on the borders according to the one presented in Figure~\ref{fig:7col_1}.
		%Color assignment on the borders illustrated by the hexagon on the right (solid lines stand for the same color as in the interior).
		}
		\label{fig:9col}
	\end{center}
\end{figure}

\begin{corollary}\label{cor:determined}\ \vspace{0pt}
	\begin{enumerate}
		\item 	For $b \in \Big(\sqrt{2-2 \sin(\frac{18 \pi}{325})}, \sqrt{7}/2 \Big] \approx (1.28599,1.32287]$ it holds $\chi(G_{[1,b]}) = 7$.
		\item 	For $b \in \Big(\sqrt{2+2\sin(\frac{7\pi}{45})}, \sqrt{3} \Big] \approx (1.71433, 1.73205]$ it holds $\chi(G_{[1,b]}) = 9$.
	\end{enumerate}
\end{corollary}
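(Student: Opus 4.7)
The plan is to combine the lower bounds already established in Theorem~\ref{thm:main} with the previously known upper bounds cited immediately before the corollary. Both items split as sandwich arguments, and no additional geometric work is needed beyond verifying that the stated intervals lie in the regime where both bounds apply.

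For part (1), I would first invoke Theorem~\ref{thm:main}(1), which gives $\chi(G_{[1,b]}) \ge 7$ whenever $b > \sqrt{2 - 2\sin(18\pi/325)}$; this handles the left endpoint of the interval. For the upper bound, I would use the $7$-coloring of Exoo (Figure~\ref{fig:7col_1}), which shows $\chi(G_{[1,b]}) \le 7$ for every $b \le \sqrt{7}/2$. Since the interval $\bigl(\sqrt{2-2\sin(18\pi/325)},\sqrt{7}/2\bigr]$ lies inside the intersection of the two regimes, both bounds apply simultaneously and force $\chi(G_{[1,b]}) = 7$.

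For part (2), the structure is identical. The lower bound $\chi(G_{[1,b]}) \ge 9$ comes from Theorem~\ref{thm:main}(3), valid for $b > \sqrt{2 + 2\sin(7\pi/45)}$. The matching upper bound $\chi(G_{[1,b]}) \le 9$ for $b \le \sqrt{3}$ is Ivanov's coloring scheme based on hexagons of diameter $1$ (Figure~\ref{fig:9col}), and again the hypothesis on $b$ places us in the intersection of the two regimes, yielding $\chi(G_{[1,b]})=9$.

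There is no real obstacle here: the corollary is essentially a bookkeeping step that packages the new lower bounds of Theorem~\ref{thm:main} together with existing upper bound constructions. The only subtlety worth mentioning explicitly is the (trivial) monotonicity observation $\chi(G_{[1,b_1]}) \le \chi(G_{[1,b_2]})$ for $b_1 \le b_2$, which is what guarantees that Exoo's and Ivanov's upper bounds, originally stated at the right endpoints $\sqrt{7}/2$ and $\sqrt{3}$, carry over to every smaller $b$ in the respective interval. After that, equality is immediate from the two-sided inequality.
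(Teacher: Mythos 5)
Your proposal is correct and is essentially identical to the paper's own justification: the corollary is obtained by combining the lower bounds of Theorem~\ref{thm:main}(1) and (3) with Exoo's upper bound $\chi(G_{[1,b]})\le 7$ for $b\le\sqrt{7}/2$ and Ivanov's upper bound $\chi(G_{[1,b]})\le 9$ for $b\le\sqrt{3}$. The monotonicity remark is harmless but not even needed, since those upper bounds are already stated for all $b$ up to the respective endpoints.
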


Note that the first interval contains and substantially enlarges the interval obtained by Exoo in Theorem~\ref{thm:exoo}. Moreover, no interval for which the chromatic number is $9$ was known before. Up to our knowledge, Corollorary~\ref{cor:determined} covers all the known distance graphs on $\mathbb{R}^2$ with a determined nontrivial chromatic number.
%the second interval was not known at all.
%Although it is not explicitly noted, the colorings used in \cite{Ivanov2007} to obtain Theorem~\ref{thm:ivanov} are based on hexagonal grids, similarly as the most known $7$-coloring in the Hadwiger-Nelson problem.

\section{Colorings of annuli}
\label{sec:annuli}
%For $X\subseteq\mathbb{R}^2$, let $G_{[1,b]}[X]$ be the subgraph of $G_{[1,b]}$ induced by the set $X$.
The proof of Theorem~\ref{thm:main} relies on lower bounds on the number of colors needed for $G_{[1,b]}[A_{b,\varepsilon}]$ for certain values of $b$ and sufficiently small $\varepsilon>0$. One may ask if we can give an upper bound for the chromatic number of these graphs (or $G_{[1,b]}[A_{b,0}]$, for simplicity). Using this knowledge we would be able to say something about the range of possible improvements in the method used in the proof of Theorem~\ref{thm:main}.

The chromatic number of subgraphs of $G_{\{1\}}$ induced by some subsets of the plane have been already studied. Bauslaugh \cite{Bauslaugh1998}, Perz \cite{Perz2018}, Bock \cite{Bock2019}, Oostema, Martins and Heule \cite{OostemaMartinsHeule2020} analyzed infinite strips, Clyde Kruskal \cite{Kruskal2008} considered circles, squares and regular polygons, Axenovich, Choi, Lastrina,  McKay,  Smith and Stanton \cite{AxenovichChoiLastrinaMcKaySmithStanton2014} studied subsets of $\mathbb{Q}\times\mathbb{R}$, set of vertices of a convex polygon, unions of lines and infinite strips.
The paper closest to our interest was published by Alm and Manske \cite{AlmManske2014}.
%The paper that is the closest to our interest was published by Alm and Manske \cite{AlmManske2014}.
The authors considered particular classes of annuli, especially with respect to a special class of colorings called radial colorings. A coloring of an annulus $A$ is a \emph{radial coloring} if there exists a sequence of radii $r_1,\ldots,r_{k+1}=r_1$ so that the sector strictly between radii $r_i$ and $r_{i+1}$ is colored with a single color for $i\in\{1,\ldots,k\}$. Thus, the color of a point (except a finite number of radii) depends only on the angle of the half-line from the center of the annulus to this point. See Figure~\ref{fig:radial1} for an example of a radial coloring. %Thus, colors of points (except a finite number of radii) depend only on the angle with respect to the center of the annulus.
The results of Alm and Manske do not have direct application for the annuli of interest in this section, as they studied annuli of outer radius smaller than $1$ (and as a subgraph of $G_{\{1\}}$). However, it appears that radial colorings themselves can be of use for our purposes.

We are interested in upper bounds for the chromatic number of $G_{[1,b]}[A_{b,\varepsilon}]$. It is sufficient to provide upper bounds for $\varepsilon=0$, since $\chi(G_{[1,b]}[A_{b,\varepsilon}]) \le \chi(G_{[1,b]}[A_{b,0}])$ for any $b>1, \varepsilon>0$.
Let us denote $A_{b}=A_{b,0}$. In the next result, we consider chromatic number of $A_{b}$ as a subgraph of $G_{[1,b]}$.
We present upper bounds for the chromatic number of annuli obtained by constructing radial proper colorings and combine them with lower bounds coming from the proof of Theorem~\ref{thm:main}.
%%In the next result we determine the chromatic number of $G_{[1,b]}[A_{b}]$ for some values of $b$ using radial colorings.
%The proof of the first case of Theorem~\ref{thm:main} implies that $G_{[1,b]}[A_{b}]\ge 4$ for any $b> \sqrt{2-2 \sin(\frac{18 \pi}{325})} \approx 1.28599$. The next result will show that in terms of the smallest possible $b$ this is very close to optimum. This means that the condition on $b$ in Theorem~\ref{thm:main}.1 cannot be substantially improved with the same method. Unfortunately, for larger values of $b$ the 

\begin{proposition}\label{prop:col_of_annuli}
	%Let $b=\sqrt{2 - 2 \sin(\frac{\pi}{18})}\approx 1.28558$. The chromatic number of $G_{[1,b]}[A_{b}]$ is $3$.
	%	~\begin{enumerate}
	%		\item For $b=\sqrt{2 - 2 \sin(\frac{\pi}{18})}\approx 1.28558$, the chromatic number of $G_{[1,b]}[A_{b}]$ is $3$.
	%		\item For $b=\sqrt{2}\approx 1.41421$, the chromatic number of $G_{[1,b]}[A_{b}]$ is $4$.
	%		\item For $b=\sqrt{\frac{3}{2} - \frac{\sqrt{5}}{2}}\approx 1.61803$, the chromatic number of $G_{[1,b]}[A_{b}]$ is $5$.
	%	\end{enumerate}
	Table~\ref{tab:annulus} presents bounds on $\chi(G_{[1,b]}[A_{b}])$.
	\begin{center}
		\begin{table}[h]
			\begin{tabular}{ | c  c  c | c | c }
				\hline
				$b\in$ & & & $\chi(G_{[1,b]}[A_{b}]) = $ \\ \hline
				
				$\Big(1,\sqrt{2 - 2 \sin(\frac{\pi}{18})}\Big]$ & $\approx$ & $(1,1.28558]$ & $3$ \\ \hline
				
				$\Big(\sqrt{2 - 2 \sin(\frac{\pi}{18})}, \sqrt{2-2 \sin(\frac{18 \pi}{325})}\Big]$ & $\approx$ & $(1.28558,1.28599]$ & $3$ or $4$ \\ \hline
				
				$\Big(\sqrt{2-2 \sin(\frac{18 \pi}{325})}, \sqrt{2} \Big]$ & $\approx$ & $(1.28599, 1.41421]$ & $4$ \\ \hline
				
				$\Big(\sqrt{2}, \sqrt{2+2\sin(\frac{\pi}{38})} \Big]$ & $\approx$ & $(1.41421, 1.47145]$ & $4$ or $5$ \\ \hline
				
				$\Big(\sqrt{2+2\sin(\frac{\pi}{38})}, \sqrt{\frac{3}{2} - \frac{\sqrt{5}}{2}} \Big]$ & $\approx$ & $(1.47145, 1.61803]$ & $5$ \\ \hline
				
				$\Big(\sqrt{\frac{3}{2} - \frac{\sqrt{5}}{2}} , \sqrt{2+2\sin(\frac{7\pi}{45})} \Big]$ & $\approx$ & $(1.61803,1.71433]$ & $5$ or $6$ \\ \hline
				
				$\Big(\sqrt{2+2\sin(\frac{7\pi}{45})}, \sqrt{3} \Big]$ & $\approx$ & $(1.71433, 1.73205]$ & $6$ \\ \hline
				
				$\Big(\sqrt{3}, 2 \cos(\frac{\pi}{7}) \Big]$ & $\approx$ & $(1.73205, 1.80194]$ & $6$ or $7$ \\ \hline
				
				$\Big(2 \cos(\frac{\pi}{7}),2\sqrt{2}-1 \Big]$ & $\approx$ & $(1.80194, 1.82843]$ & $6$ or $7$ or $8$ \\ \hline
				
				$\Big(2\sqrt{2}-1, \sqrt{2+\sqrt{2}} \Big]$ & $\approx$ & $(1.82843, 1.84776]$ & $7$ or $8$ \\ \hline	
			\end{tabular}
			\caption{Known bounds on $\chi(G_{[1,b]}[A_{b}])$ depending on $b$.}
			\label{tab:annulus}
		\end{table}
	\end{center}
\end{proposition}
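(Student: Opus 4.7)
The proof combines lower-bound arguments from finite subgraphs with upper-bound arguments from explicit radial colorings. For the lower bounds, the base case $\chi(G_{[1,b]}[A_b]) \ge 3$ holds for every $b > 1$ because $A_b$ always contains an odd cycle in $G_{[1,b]}$: a density argument produces an odd integer $n$ and an integer $p$ coprime to $n$ with $\alpha := 2\pi p/n \in (\pi/3, 2\arcsin(b/2))$, after which the $n$-gon on the inner unit circle with vertex angular separations $\alpha$ has all sides realizing distances in $[1,b]$ and is an odd cycle in $G_{[1,b]}[A_b]$. The stronger lower bounds $\chi \ge 4, 5, 6, 7, 8$ come for free from Theorem~\ref{thm:main}: the finite configurations $Y_{b,\varepsilon}$ built there already lie inside $A_{b,\varepsilon} \subseteq A_b$, so the chromatic numbers certified in that proof by the integer linear programming solver transfer as lower bounds for $\chi(G_{[1,b]}[A_b])$ at exactly the same $b$-thresholds. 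Each row's lower bound in Table~\ref{tab:annulus} is thereby supplied.

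For the upper bounds, the plan is to exhibit, for each target $k \in \{3, 4, 5, 6, 7, 8\}$, a proper radial $k$-coloring of $A_b$ valid for the largest possible $b$. I partition the plane around the origin into $n = mk$ open sectors of angular width $\beta = 2\pi/(mk)$ for a well-chosen integer $m \ge 2$, and color sector $i$ with color $i \bmod k$. The boundary rays can be absorbed into either adjacent sector, since for $b < 2$ a single radial segment in $A_b$ has length $b - 1 < 1$, hence carries no edge, and the cross-sector angular separations remain bounded below by $(k-1)\beta$ even after absorption. Properness then reduces to two elementary geometric conditions: (a) the closed sector has diameter at most $1$, equivalently $b \le 2\cos\beta$, with extremal pair an inner corner and the opposite outer corner at distance $\sqrt{1 + b^2 - 2b\cos\beta}$; and (b) the infimum distance between two distinct same-color open sectors is at least $b$, equivalently $b \le 2\sin\!\bigl((k-1)\pi/(mk)\bigr)$, with extremal pair on the inner circle at angular separation $(k-1)\beta$.

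For each $k$ I then select the integer $m \ge 2$ that maximizes $\min\!\bigl(2\cos(2\pi/(mk)),\, 2\sin((k-1)\pi/(mk))\bigr)$. A short trigonometric check reveals two regimes: $m = 3$ is optimal for $k \in \{3, 4\}$ with (b) binding, yielding thresholds $2\sin(2\pi/9) = \sqrt{2 - 2\sin(\pi/18)}$ and $2\sin(\pi/4) = \sqrt{2}$; while $m = 2$ is optimal for $k \in \{5, 6, 7, 8\}$ with (a) binding, yielding thresholds $2\cos(\pi/k)$, which match the corresponding entries $\sqrt{(3+\sqrt{5})/2}$, $\sqrt{3}$, $2\cos(\pi/7)$, and $\sqrt{2+\sqrt{2}}$ of Table~\ref{tab:annulus}. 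Combining these upper bounds with the lower bounds of the first paragraph produces every row of Table~\ref{tab:annulus}: single-valued rows arise where upper and lower thresholds coincide, while bracketed rows correspond to gaps where the smaller-$k$ radial coloring has just failed and the next-larger lower bound from Theorem~\ref{thm:main} has not yet kicked in.

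The main technical subtlety lies in verifying (a): depending on $(b, \beta)$, the in-sector supremum could a priori be achieved either by the inner-to-opposite-outer-corner pair or by two outer corners (distance $2b\sin(\beta/2)$); the comparison reduces to the sign of $(1 - b)\bigl((1 + b) - 2b\cos\beta\bigr)$, and one checks by direct substitution of the relevant $(b, \beta)$ that the inner-to-opposite-outer-corner pair is the binding one in every regime considered. All remaining checks are routine trigonometry.
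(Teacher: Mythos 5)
Your proof is correct and follows essentially the same route as the paper: the lower bounds are inherited from the finite configurations constructed in the proof of Theorem~\ref{thm:main} (via $A_{b,\varepsilon}\subseteq A_b$), and the upper bounds come from radial colorings with $mk$ sectors of width $2\pi/(mk)$, taking $m=3$ for $k\in\{3,4\}$ and $m=2$ for $k\in\{5,\dots,8\}$ --- exactly the colorings $c(p)=\lfloor \angle(p)/\beta\rfloor \bmod k$ that the paper exhibits, with the same two geometric checks (sector diameter below $1$, inter-component gap above $b$). Two minor notes: your explicit odd-cycle argument for the base lower bound of $3$ fills in a step the paper leaves implicit (its blanket attribution of all lower bounds to Theorem~\ref{thm:main} does not literally cover the first row), and you correctly read the $k=5$ threshold as $2\cos(\pi/5)=\sqrt{(3+\sqrt{5})/2}$, where the paper's table carries a sign typo; also, for $k=3$ the outer--outer corner pair actually exceeds the inner--outer one, contrary to your closing remark, but both lie safely below $1$ there since condition (b) is the binding one, so nothing breaks.
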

\begin{proof}\ 
	All the presented lower bounds are implied by the proof of Theorem~\ref{thm:main}, as $\chi(G_{[1,b]}[A_{b,\varepsilon}]) \le \chi(G_{[1,b]}[A_{b}])$ for any $b>1, \varepsilon>0$. We need to prove the upper bounds. We will give detailed reasoning only for two of the upper bounds. In other cases, we only present the proper coloring - the rest follows similarly.
	%In other cases, we only present the coloring - the rest of the proofs follow similarly.
	%In other cases, we only give a coloring with an appropriate number of colors. The reasoning for the correctness of each coloring follows similarly.
	%TODO: zgrabniej to powyżej?
	\begin{enumerate}
		\item %It is easy to see that $G_{[1,b]}[A_{b}]$ cannot be $2$-colorable as it contains odd cycles.
		We will show that $\chi(G_{[1,b]}[A_{b}]) \le 3$ for $b=\sqrt{2 - 2 \sin(\frac{\pi}{18})}$ .
		
		We shall define a radial proper $3$-coloring $c$ of $G_{[1,b]}[A_{b}]$. For a point $p\in A_{b}$, let $\angle(p)$ stands for the inclined angle of the outer radius of $A_{b}$ containing $p$. Then put 
		$$c(p)=\floor{\frac{\angle(p)}{(2/9)\pi}} \bmod 3.$$
		Less formally, the color is assigned according to the angle and it is changed cyclically after an interval of length $(2/9)\pi$ (in terms of angle). The coloring is depicted on Figure~\ref{fig:radial1}. The additionally marked distances  will be explained later in the proof.
		%The coloring is depicted on Figure~\ref{fig:radial1} (the additional distances will be explained later in the proof).		
		\begin{figure}[h]
			\begin{center}
				\includegraphics[width=0.3\textwidth]{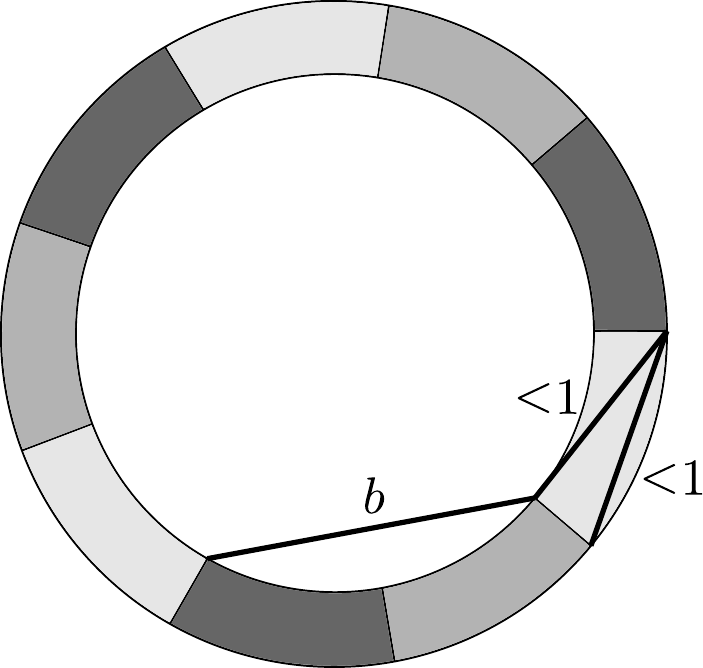}
				\caption{A proper $3$-coloring of $G_{[1,b]}[A_{b}]$ for $b=\sqrt{2 - 2 \sin(\frac{\pi}{18})}$.}
				\label{fig:radial1}
			\end{center}
		\end{figure}
		
		Let us argue that $c$ is a proper coloring of $G_{[1,b]}[A_{b}]$. %Assume that the center of the annulus is the center of the coordinate system.
		First, the distance between two points inside a single connected component of a color is (strictly) smaller than the diameter of this component. Note that strictness follows from the choice of colors on boundaries of single-colored regions. On the other hand, this diameter is the maximum of the following two distances: between a pair of points on the outer circle with angle difference $(2/9)\pi$ and between a pair of a point on each the outer and the inner circle with angle difference $(2/9)\pi$. Denote these values by $d_1$ and $d_2$, respectively.
		%%First, the distance between two points inside a single color connected component is (strictly) smaller than the maximum of the following two distances: between a pair of points on the outer circle with angle difference $(2/9)\pi$ and between a pair of a point on each the outer and the inner circle with angle difference $(2/9)\pi$. Denote these values by $d_1$ and $d_2$, respectively. Note that strictness of the inequality between the distance between two points in a single color connected component and $\max{d_1,d_2}$ follows from the choice of colors on boundaries of single-colored regions.
		%First, the distance between two points inside a single color connected component is smaller than the maximum of distances from a point on the outer circle to a point on the outer circle or a point on the inner circle with angle difference $(2/9)\pi$.
		By the law of cosines we have
		$$d_1 = \sqrt{2b^2-2b^2\cos((2/9)\pi)}  < 1,$$
		$$d_2 = \sqrt{b^2+1^2-2b\cos((2/9)\pi)}  < 1.$$
		
		Secondly, the minimal distance between two points of the same color but from different connected components of this color is (strictly) larger than the distance between two points on the inner circle with angle difference $2\cdot(2/9)\pi$. As before, the strictness of this inequality follows from the choice of colors on boundaries of single-colored regions.
		By the law of cosines, this is equal to
		$$\sqrt{2-2\cos((4/9)\pi)} = \sqrt{2 - 2 \sin(\frac{\pi}{18})} = b.$$

		\item We will show that $\chi(G_{[1,b]}[A_{b}]) \le 4$ for $b=\sqrt{2}$.
		
		For a point $p\in A_{b}$, define a radial proper $4$-coloring $c$ of $G_{[1,b]}[A_{b}]$ by 
		$$c(p)=\floor{\frac{\angle(p)}{(1/6)\pi}} \bmod 4.$$
		%Less formally, the color is assigned according to the angle and it is changed cyclically after an interval of length $(2/9)\pi$ (in terms of angle).
		
		First, the distance between two points inside a single color connected component is (strictly) smaller than the maximum of the following two distances: between a pair of points on the outer circle with angle difference $(1/6)\pi$ and between a pair of a point on each the outer and the inner circle with angle difference $(2/12)\pi$. Denote these values by $d_1$ and $d_2$, respectively.
		By the law of cosines we have
		$$d_1 = \sqrt{2b^2-2b^2\cos((1/6)\pi)}  < 1,$$
		$$d_2 = \sqrt{b^2+1^2-2b\cos((1/6)\pi)}  < 1.$$
		
		Secondly, the minimal distance between two points of the same color but from different connected components of this color is (strictly) larger than the distance between two points on the inner circle with angle difference $3\cdot(1/6)\pi$.  By the law of cosines, this is equal to
		$$\sqrt{2-2\cos((3/6)\pi)} = b.$$

		\item 
		In order to show that $\chi(G_{[1,b]}[A_{b}]) \le 5$ for $b=\sqrt{\frac{3}{2} - \frac{\sqrt{5}}{2}}$, one can consider the following proper $5$-coloring $c$. For a point $p\in A_{b}$, define $$c(p)=\floor{\frac{\angle(p)}{(1/5)\pi}} \bmod 5.$$
		
		%		We will show that $\chi(G_{[1,b]}[A_{b}]) \le 5$ for $b=\sqrt{\frac{3}{2} - \frac{\sqrt{5}}{2}}$.
		%		
		%		For a point $p\in A_{b}$, define a radial $5$-coloring $c$ of $G_{[1,b]}[A_{b}]$ by 
		%		$$c(p)=\floor{\frac{\angle(p)}{(1/5)\pi}} \bmod 5.$$
		
		\item In order to show that $\chi(G_{[1,b]}[A_{b}]) \le 6$ for $b=\sqrt{3}$, one can consider the following proper $6$-coloring $c$. For a point $p\in A_{b}$, define $$c(p)=\floor{\frac{\angle(p)}{(1/6)\pi}} \bmod 6.$$
		
		\item 
		In order to show that $\chi(G_{[1,b]}[A_{b}]) \le 7$ for $b= 2 \cos(\frac{\pi}{7})$, one can consider the following proper $7$-coloring $c$. For a point $p\in A_{b}$, define $$c(p)=\floor{\frac{\angle(p)}{(1/7)\pi}} \bmod 7.$$
		
		\item 
		In order to show that $\chi(G_{[1,b]}[A_{b}]) \le 8$ for $b=\sqrt{2+\sqrt{2}}$, one can consider the following proper $8$-coloring $c$. For a point $p\in A_{b}$, define $$c(p)=\floor{\frac{\angle(p)}{(1/8)\pi}} \bmod 8.$$
	\end{enumerate}
\end{proof}

As mentioned before, the proof of Theorem~\ref{thm:main} relies on lower bounds for $\chi(G_{[1,b]}[A_{b,\varepsilon}])$ for certain values of $b$ and sufficiently small $\varepsilon>0$. In particular, the proof of Theorem~\ref{thm:main}.1 uses the inequality $\chi(G_{[1,b]}[A_{b,\varepsilon}])\ge 4$ for any $b> \sqrt{2-2 \sin(\frac{18 \pi}{325})} \approx 1.28599$ and sufficiently small $\varepsilon>0$. 
%On the other hand, as can be seen in Proposition~\ref{prop:col_of_annuli}, for slightly smaller values of $b$ we already know that $3$ colors are enough for $G_{[1,b]}[A_{b}]$. It is clear that $\chi(G_{[1,b]}[A_{b,\varepsilon}]) \le \chi(G_{[1,b]}[A_{b}])$ holds for any $\varepsilon>0$.
%%%On the other hand, $3$ colors are enough for slightly smaller values of $b$ -- as implied by Proposition~\ref{prop:col_of_annuli} and the inequality $\chi(G_{[1,b]}[A_{b,\varepsilon}]) \le \chi(G_{[1,b]}[A_{b}])$ for any $\varepsilon>0$.
On the other hand, $3$ colors are enough for slightly smaller values of~$b$. Namely, $\chi(G_{[1,b]}[A_{b,\varepsilon}])\le 3$ for any $b\le \sqrt{2 - 2 \sin(\frac{\pi}{18})}\approx 1.28558$ and $\varepsilon>0$ is implied by Proposition~\ref{prop:col_of_annuli} and the inequality $\chi(G_{[1,b]}[A_{b,\varepsilon}]) \le \chi(G_{[1,b]}[A_{b}])$ for any $\varepsilon>0$.
%In particular, the proof of Theorem~\ref{thm:main}.1 implies that $\chi(G_{[1,b]}[A_{b}])\ge 4$ for any $b> \sqrt{2-2 \sin(\frac{18 \pi}{325})} \approx 1.28599$. As can be seen in Proposition~\ref{prop:col_of_annuli}, for slightly smaller value of $b$ we already know that $3$ colors are enough for $G_{[1,b]}[A_{b}]$. 
This means that the condition on $b$ in Theorem~\ref{thm:main}.1 cannot be substantially improved with the same method. Unfortunately, for larger values of $b$, this gap is getting much larger. Hence for those values, we do not know if the constructions given in the proof of Theorem~\ref{thm:main} are close to the potential optima. We believe that more complex colorings of annuli need to be constructed to reduce the gap.

%	The proof of Theorem~\ref{thm:main}.1 implies that $\chi(G_{[1,b]}[A_{b}])\ge 4$ for any $b> \sqrt{2-2 \sin(\frac{18 \pi}{325})} \approx 1.28599$. On the other hand, Proposition~\ref{prop:col_of_annuli} 
%	states that  $\chi(G_{[1,b]}[A_{b}])\le 3$ for any $b\le \sqrt{2 - 2 \sin(\frac{\pi}{18})}\approx 1.28558$. Hence, in terms of the smallest possible $b$, the proof of Theorem~\ref{thm:main}.1 is very close to optimum. This means that the condition on $b$ in Theorem~\ref{thm:main}.1 cannot be substantially improved with the same method. Unfortunately, for larger values of $b$ this gap is getting much larger. It may be that more complex colorings of annuli need to be considered to reduce the gap.

\section{Upper bounds on number of colors for the plane} \label{sec:upper}
\subsection{Eight colors}

In this subsection, we present that $8$ colors allows to color $G_{[1,b]}$ for bigger $b$, than for $7$ colors. The coloring was inspired by a $7$-coloring of $G_{\{1\}}$ by Edward Pegg, Jr. \cite{Soifer2009}, such that the seventh color occupies only about $1/3$ of $1$\% of the plane.

\begin{theorem}\label{thm:8-col}
For $b=1.37542$ holds $\chi(G_{[1, b]})\le 8$
\end{theorem}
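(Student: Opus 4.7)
The plan is to construct an explicit periodic $8$-coloring of $\mathbb{R}^2$ and verify directly that adjacent points (i.e. points at distance in $[1,b]$) always receive different colors for $b=1.37542$. The construction will be modeled on Pegg's $7$-coloring of $G_{\{1\}}$: six of the colors fill hexagon-like regions that cover almost all of the plane, while the two remaining colors occupy thin corridors or small patches along certain seams between hexagons. The use of two ``rare'' colors instead of Pegg's single one is what provides the slack over the unit-distance coloring: splitting the seam color into two allows the main hexagonal regions to be enlarged (and the seam to be widened), which in turn is exactly what increases the usable range of forbidden distances from $1$ up to $b$.

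First I would fix the fundamental domain and present coordinates: a parallelogram spanned by two translation vectors, containing exactly one copy of each of the eight color classes. Six of them are congruent hexagon-shaped regions arranged in the usual honeycomb pattern, tuned so that their diameter is strictly less than $1$; the other two classes consist of narrow strips (or small polygons) placed along every other seam between hexagons, again with diameter strictly less than $1$. I would also specify the border assignment (closed vs.\ open edges and corners) in the spirit of Figure~\ref{fig:7col_1}, so that every point of the plane is assigned exactly one color and no two boundary points of the same color are at distance exactly in $[1,b]$.

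Second I would verify the two required inequalities. The intra-region inequality says that inside any single monochromatic region, any two points are at distance $<1$, which reduces to checking that the diameter of a hexagon and the diameter of a seam piece are both less than $1$. The inter-region inequality says that any two points in distinct regions sharing a color are at distance $>b=1.37542$; by periodicity this reduces to finitely many pairs of congruent regions within a bounded window around the fundamental domain, and for each such pair the minimum distance is realized between pairs of (boundary) extreme points and computed by elementary trigonometry, just as in the proof of Proposition~\ref{prop:col_of_annuli}. The numerical value $1.37542$ will emerge as the smallest of the inter-region distances once the hexagon side length and seam width are pushed to the point where the intra-region diameters are about to reach $1$.

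The main obstacle is the simultaneous tuning of the geometric parameters. Unlike the Exoo/Ivanov colorings, where only a single parameter (hexagon diameter) is optimized against a single constraint, here the hexagon side length, the seam width, and the placement of the two rare colors must be balanced so that the intra-region diameter constraint and the several inter-region distance constraints are tight at essentially the same value of $b$. I would first write the relevant distances as closed-form functions of one or two geometric parameters, then solve the resulting small extremal problem to identify the parameter values that maximize the minimum inter-region separation subject to the diameter constraint; the numerical value $b=1.37542$ records this optimum. Finally, an honest (but tedious) case analysis over the finitely many same-color pairs in a bounded window, together with a careful boundary assignment, completes the verification that the resulting coloring is proper for $G_{[1,b]}$.
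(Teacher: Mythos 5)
Your plan follows the same general philosophy as the paper -- enlarge the hexagons of a hexagonal $7$-coloring by introducing additional ``rare'' color(s) near the seams, then optimize the geometric parameters against a diameter constraint ($<1$) and a separation constraint ($>b$) -- but the specific construction you describe is not the one the paper uses, and as written the argument has a genuine gap. The paper keeps all \emph{seven} hexagon colors of the classic tiling and adds a single eighth color consisting of small triangles placed at every second meeting point of three hexagons; this truncation of alternate corners is what lets the hexagons grow. You instead propose \emph{six} hexagonal colors plus \emph{two} rare seam colors, in the spirit of Pegg's $6{+}1$ construction. That is a different optimization problem with different constraints, and you give no reason why its optimum should be $1.37542$ rather than some other (possibly smaller) value. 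The number $1.37542$ is not a target to be ``recorded''; it is the solution of a specific system -- in the paper, maximizing $b$ subject to $y\sqrt{3}+x\le 1$, $2y\sqrt{3}-x\ge b$, $\bigl(2y-\tfrac{x}{2\sqrt{3}}\bigr)^2+\bigl(\tfrac{x}{2}\bigr)^2\le 1$ and $\bigl(\tfrac{3}{2}y\sqrt{3}\bigr)^2+\bigl(\tfrac{y}{2}+\tfrac{x}{\sqrt{3}}\bigr)^2\ge b^2$, attained at $y=0.514884$, $x=0.108194$. Asserting that your (unspecified) regions will produce the same optimum is the step that would fail, or at least cannot be checked.

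To close the gap you must actually exhibit the regions: fix the translation lattice, write down the vertices of each monochromatic piece as functions of your one or two free parameters, enumerate the finitely many same-color region pairs in a bounded window, and reduce each to an explicit inequality. Only then can you solve the extremal problem and see what $b$ your construction supports. Be aware that with only six hexagonal colors the same-colored hexagons sit on a denser sublattice than in the seven-color pattern, so the inter-region separation constraint is tighter from the start; it is far from clear that two thin seam colors buy back enough room to reach $1.37542$. If you want a proof of the stated theorem rather than of some bound, the safer route is the paper's: keep the seven-color pattern and spend the eighth color on small triangles at alternate triple points.
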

\begin{proof}
The coloring is given by Figure \ref{trojkaciki} and it is a modified classic 7-coloring (see Figure~\ref{fig:7col-eps}) by adding a small triangle in the eighth color in every second meeting point of their hexagons. This allows to make hexagons bigger and in consequence enlarge $b$. Let $x,y$ be defined by the Figure \ref{trojkacikiZblizenie}.

\begin{figure}[h]\begin{center}
\includegraphics[width=0.8\textwidth]{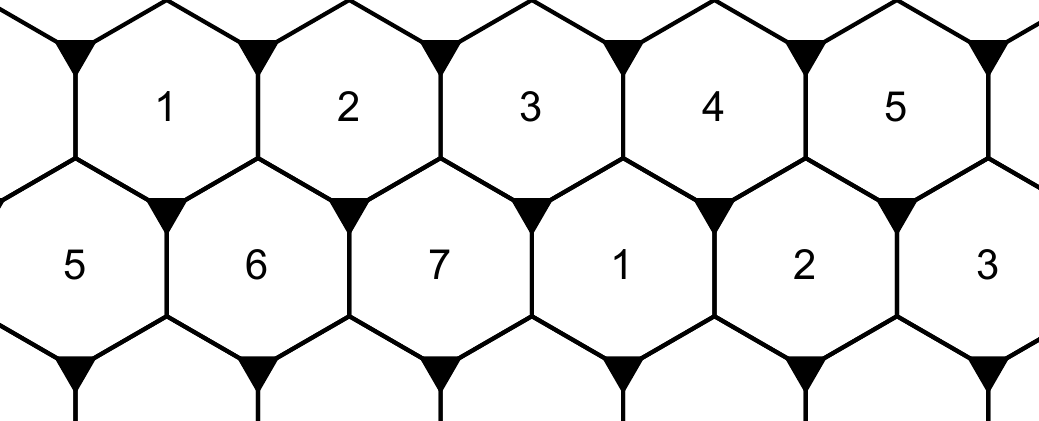}
\caption{8-coloring, black triangles are in color number 8}
\label{trojkaciki}
\end{center}
\end{figure}

\begin{figure}[h]\begin{center}
\includegraphics[width=0.8\textwidth]{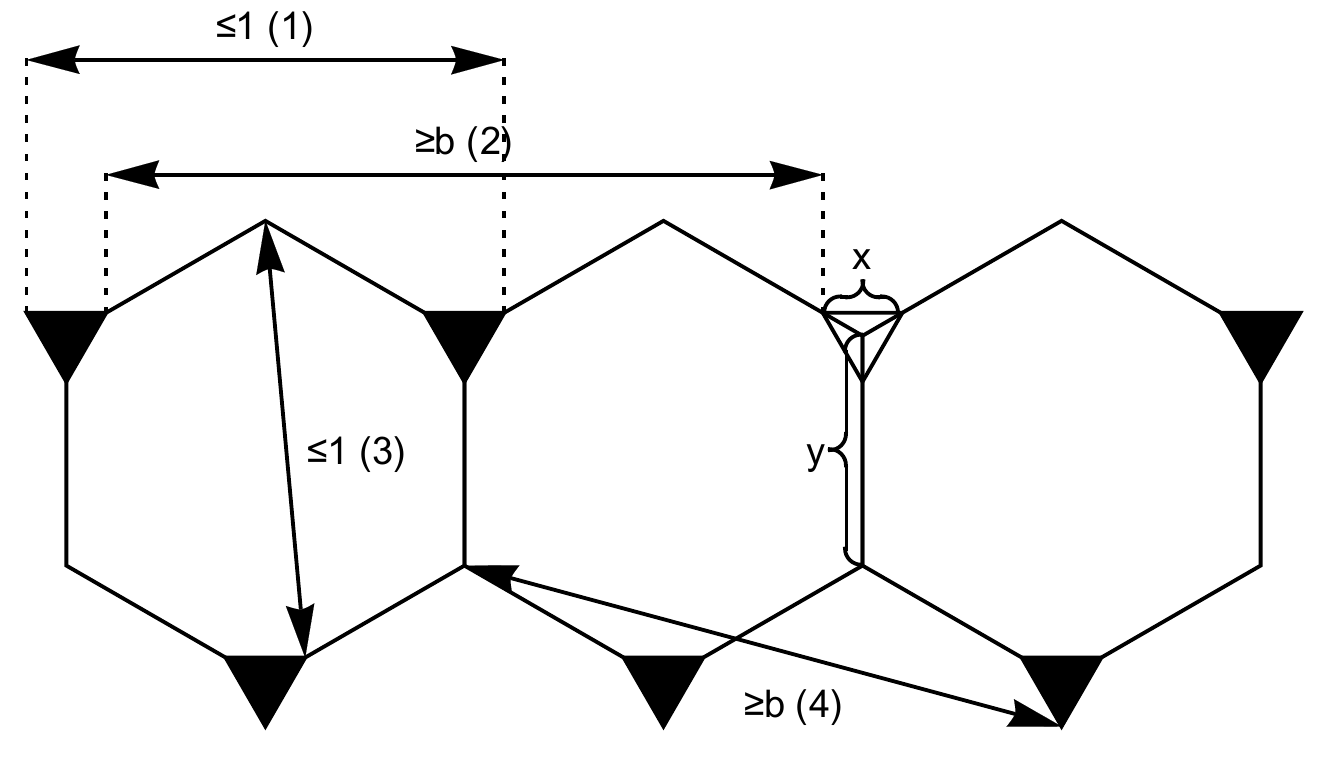}
\caption{8-coloring - restrictions}
\label{trojkacikiZblizenie}
\end{center}
\end{figure}

 To make the coloring proper, $x$ and $y$ must fulfill restrictions: 
\begin{enumerate}
\item $y\sqrt{3}+x\le 1$
\item $2y\sqrt{3}-x\ge b$
\item $ \left(2 y - \frac{x}{2 \sqrt{3}}\right)^2 + \left(\frac{x}{2}\right)^2\le 1$
\item $\left(\frac{3}{2}y\sqrt{3}\right)^2+\left(\frac{y}{2}+\frac{x}{\sqrt{3}}\right)^2\ge b^2$
\end{enumerate}

Under these restrictions, we maximize $b$ obtaining $b=1.37542$ for $y=0.514884$ and $x=0.108194$. 
\end{proof}

\subsection{General method for coloring $G_{[1,b]}$}

A few general bounds on the number of colors for larger values of parameter $b$ are given in the following theorems:

\begin{theorem}[Exoo \cite{Exoo2005}] \label{exoo} 
If $b<\frac{1}{2}\sqrt{9r^2-3r+1}$, then $\chi(G_{[1,b]})\le 3r^2+3r+1$.
\end{theorem}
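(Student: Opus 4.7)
The plan is to extend the classical hexagonal $7$-coloring scheme to accommodate more colors. Fix a parameter $d<1$ and tile the plane with congruent, identically-oriented regular hexagons of diameter $d$, assigning each hexagon a single color. The coloring is periodic: identifying the hexagon centers with the hexagonal lattice $\Lambda$, two hexagons share a color exactly when their centers differ by an element of a sublattice $\Lambda'\subset\Lambda$. To obtain $3r^{2}+3r+1$ color classes, one takes $[\Lambda:\Lambda']=3r^{2}+3r+1$; viewing $\Lambda\cong\mathbb{Z}[\omega_{6}]$ as the Eisenstein integers, this is achieved by $\Lambda'=\alpha\Lambda$ for $\alpha=(r+1)+r\omega_{6}$, whose Eisenstein norm $N(\alpha)=(r+1)^{2}+(r+1)r+r^{2}=3r^{2}+3r+1$ equals the index.

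The coloring is proper for $G_{[1,b]}$ provided two distance conditions hold: points within a single hexagon are at distance at most $d<1$ and hence not in $[1,b]$, which is automatic; and every pair of same-colored hexagons sits at distance greater than $b$. Since longer sublattice vectors yield larger inter-hex distances via the triangle inequality, only the six shortest generators of $\Lambda'$ need to be analyzed.

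The main technical obstacle is to pin down the minimum inter-hex distance precisely, because the naive triangle-inequality bound $|v|-d$ is too weak. Using the embedding $\omega_{6}\mapsto(1/2,\sqrt{3}/2)$ and letting $c=d\sqrt{3}/2$ be the center-to-center distance in the original tiling, the shortest generator is $v=\bigl(c(3r+2)/2,\,cr\sqrt{3}/2\bigr)$ of length $c\sqrt{3r^{2}+3r+1}$. Its argument $\varphi=\arctan\bigl(r\sqrt{3}/(3r+2)\bigr)$ lies in $[0^{\circ},30^{\circ})$, while the nearest vertex direction of the hexagons in the standard tiling is $30^{\circ}$. Since the two hexagons share an orientation, the minimum is realized by two antipodal vertices aligned with this nearest vertex direction; applying the law of cosines together with the identity $|v|\cos(30^{\circ}-\varphi)=c\sqrt{3}(2r+1)/2$ produces
\[
\text{(vertex-to-vertex distance)}^{2} \;=\; |v|^{2} - 4R|v|\cos(30^{\circ}-\varphi) + 4R^{2} \;=\; \tfrac{d^{2}}{4}\bigl(9r^{2}-3r+1\bigr),
\]
where $R=d/2$ is the hexagon circumradius.

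Combining the two constraints, the coloring is proper exactly when $d<1$ and $(d/2)\sqrt{9r^{2}-3r+1}>b$, which are jointly satisfiable precisely when $b<\tfrac{1}{2}\sqrt{9r^{2}-3r+1}$; any $d\in\bigl(2b/\sqrt{9r^{2}-3r+1},\,1\bigr)$ works. Shared boundary points of adjacent hexagons can be broken arbitrarily, since all inequalities above are strict, giving $\chi(G_{[1,b]})\le 3r^{2}+3r+1$.
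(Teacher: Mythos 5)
Your construction is correct and is essentially the approach the paper takes (and that Exoo took): it is exactly the paper's $(p,q)$-coloring of the hexagonal tiling with $(p,q)=(r,r+1)$, whose color count $p^2+pq+q^2=3r^2+3r+1$ matches your Eisenstein-norm computation, and your vertex-to-vertex minimum separation $\frac{d}{2}\sqrt{9r^2-3r+1}$ between same-colored tiles checks out (for $r\ge 1$ the translate $v$ does lie in the normal cone of the $30^\circ$ vertex of the difference hexagon, so the closest pair is indeed the antipodal vertex pair). The one step you should make explicit is the dismissal of longer sublattice vectors: for the second-shortest vectors of $\Lambda'$, of length $\sqrt{3}\,c\sqrt{3r^2+3r+1}$, the crude bound $|w|-d$ already exceeds $\frac{d}{2}\sqrt{9r^2-3r+1}$, which is what actually justifies restricting attention to the six shortest generators.
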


Actually, Exoo stated his result for the graph $G_{[1-\varepsilon,1+\varepsilon]}$, but as we mentioned before his notion can be translated easily into ours.

Lonc \cite{lonc} generalized idea of $12$-coloring by Ivanow \cite{Ivanov2007} and obtained improvement  for some values of $b$:

\begin{theorem}[Lonc \cite{lonc}] \label{lonc} 
If $b\leq\frac{3}{2}r-1$, then $\chi(G_{[1,b]})\le 3r^2$.
\end{theorem}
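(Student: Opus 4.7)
The plan is to generalize the classical periodic hexagonal coloring behind the $7$-coloring of $G_{\{1\}}$ and Ivanov's $12$-coloring (the case $r = 2$). I would tile the plane with regular hexagons of diameter $1$, and color each hexagon by which coset of a carefully chosen sublattice of index $3r^2$ its center belongs to, obtaining a periodic coloring with exactly $3r^2$ colors.

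More precisely, let $\Lambda$ denote the triangular lattice of hexagon centers; it has a basis $v_1, v_2$ of length $\sqrt{3}/2$ with angle $\pi/3$ between them. Identifying $\Lambda$ (up to the scale factor $\sqrt{3}/2$) with the Eisenstein integers $\mathbb{Z}[\omega]$, where $\omega = e^{i\pi/3}$, I would take the sublattice $\Lambda' = (r + r\omega)\Lambda$. A direct calculation yields $[\Lambda : \Lambda'] = N(r + r\omega) = r^2 + r\cdot r + r^2 = 3r^2$ and shows that the shortest nonzero vector in $\Lambda'$ is $r v_1 + r v_2$, of length $\frac{3r}{2}$. Coloring every hexagon by the coset of its center in $\Lambda / \Lambda'$ then produces a $3r^2$-coloring of the hexagonal tiling.

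To verify this yields a proper coloring of $G_{[1, b]}$ for $b \le \frac{3r}{2} - 1$, I would split into two cases. Two same-colored points inside a single hexagon are at distance at most the diameter $1$; to rule out distance exactly $1$ between opposite vertices, I would assign colors on hexagon boundaries using the same trick as in Figure~\ref{fig:7col_1}. Two same-colored points in distinct hexagons arise from centers $p \ne q$ with $p - q \in \Lambda' \setminus \{0\}$, so $\dist(p, q) \ge \frac{3r}{2}$ and hence these points are at distance at least $\frac{3r}{2} - 1 = b$. The critical distance exactly $b$ is attained only between specific boundary points of a pair of closest same-colored hexagons, and the boundary-color assignment can be tuned to avoid it.

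The main obstacle I expect is producing a single assignment of colors on hexagon boundaries that simultaneously avoids the two critical distances $1$ (within a hexagon) and $b$ (between closest same-colored hexagons), uniformly for every integer $r \ge 2$. The sublattice computation and the coset count are straightforward; the combinatorial bookkeeping at the boundaries is the delicate part, though in spirit not different from the classical $7$-coloring.
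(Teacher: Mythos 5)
Your construction is correct and is essentially the paper's: coloring hexagons by cosets of the sublattice $(r+r\omega)\Lambda$ is exactly the $(r,r)$-coloring in the paper's general $(p,q)$-scheme (monochromatic centers form the lattice spanned by $r(s_1+s_2)$ and its rotation by $\pi/3$), giving $p^2+pq+q^2=3r^2$ colors with nearest same-colored centers at distance $3r/2$. The boundary issue you flag is no harder than in the classical $7$-coloring and needs no $r$-dependent tuning: since $r(v_1+v_2)$ and its $\pi/3$-rotations point exactly at hexagon vertices, the only critical pairs (realizing distance $1$ within a hexagon or $\tfrac{3}{2}r-1$ between nearest same-colored hexagons) are antipodal-type vertex pairs, so the single half-open assignment of Figure~\ref{fig:7col_1}, which keeps at most one vertex from each antipodal pair, works uniformly for every $r$.
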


%\kjs{Asiu prosze uzupełnij, to jest nasz  dla (p,q)=(r,r) w jezyku epsolona to $\chi(G)\le 3r^2+6r+3=3(r+1)^2$ dla $\varepsilon\le \frac{3r-1}{3r+3}$}
%\js{nie zgadza się..., po chi widzę raczej (r+1,r+1), zmieniam na prostszą formę}

The next bound (again partially improving previous ones) follows as a special case of a fractional multifold coloring scheme established in \cite{GrytczukJunoszaSokolWesek2016}. 

\begin{theorem}[Chybowska-Sokół, Grytczuk, Junosza-Szaniawski, Węsek \cite{GrytczukJunoszaSokolWesek2016}]
If $b\leq\frac{\sqrt{3}}{2}(r-1)$, then $\chi(G_{[1,b]})\le r^2$.\label{1warstwowe}
\end{theorem}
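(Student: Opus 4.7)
I would establish the bound by explicitly constructing an $r^{2}$-colouring of $\mathbb{R}^{2}$ that is proper for $G_{[1,b]}$ with $b\le \tfrac{\sqrt{3}}{2}(r-1)$. The construction is the natural generalisation of the classical $7$-colouring from Figure~\ref{fig:7col-eps}: tile the plane by regular hexagons of diameter slightly less than $1$ and distribute colours according to the cosets of a sublattice of index $r^{2}$ of the underlying triangular lattice. The stated bound then follows by a direct distance calculation.

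\textbf{Step 1 (the tile).} Fix a small $\varepsilon>0$ and set $a=\tfrac{\sqrt{3}}{2}-\varepsilon$; more generally, choose any $a$ with $\tfrac{b}{r-1}<a<\tfrac{\sqrt{3}}{2}$, which is possible since we are in the regime $b<\tfrac{\sqrt{3}}{2}(r-1)$. Let $\Lambda$ be the triangular lattice generated by $(a,0)$ and $(a/2,a\sqrt{3}/2)$, and for each $\lambda\in\Lambda$ let $V_{\lambda}$ be its Voronoi cell, a regular hexagon of apothem $a/2$ and circumradius $a/\sqrt{3}$, hence of diameter $2a/\sqrt{3}<1$. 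Boundaries of cells are assigned to adjacent cells using a standard half-open convention so that the cells form a partition of $\mathbb{R}^{2}$.

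\textbf{Step 2 (the colouring).} Pass to the sublattice $r\Lambda\subseteq\Lambda$, which has index $r^{2}$. Assign to each Voronoi cell $V_{\lambda}$ the colour given by the coset $\lambda+r\Lambda$, yielding an explicit $r^{2}$-colouring of the plane.

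\textbf{Step 3 (verification).} Two points in a common cell are at distance at most the cell diameter $2a/\sqrt{3}<1$, so their distance is not in $[1,b]$. For two points lying in distinct same-coloured cells, the centres of these cells differ by a nonzero element of $r\Lambda$, hence are at distance at least $ra$; the pair of same-coloured cells realising this minimum lies along an apothem direction, so the nearest points of the two cells are at distance $ra-2\cdot(a/2)=(r-1)a>b$ by the choice of $a$. Thus no monochromatic pair has distance in $[1,b]$.

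\textbf{Main obstacle.} The delicate point is the boundary case $b=\tfrac{\sqrt{3}}{2}(r-1)$, where the two strict inequalities on $a$ collapse: one cannot simultaneously ensure $\operatorname{diam}(V_{\lambda})<1$ and $(r-1)a>b$. Resolving this requires a careful decision about which closed edges and vertices belong to which cell, so that no two opposite vertices of the same closed hexagon end up in the same colour class, and similarly no two points realising the minimal inter-cell distance $(r-1)a=b$ do either. This mirrors the boundary-colour trick of the classical $7$-colouring indicated in Figure~\ref{fig:7col_1}.
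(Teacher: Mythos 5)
Your construction is correct and essentially identical to the paper's: colouring the Voronoi hexagons of a triangular lattice $\Lambda$ by the cosets of $r\Lambda$ is exactly the $(r,0)$-colouring of the hexagonal tiling that the paper's remark identifies as the source of this bound, and your treatment of the extremal case $b=\tfrac{\sqrt{3}}{2}(r-1)$ via a half-open boundary convention is precisely the paper's device of hexagons of diameter $1$ with part of the boundary removed (Figures~\ref{fig:7col_1} and~\ref{heksagon}). No further changes are needed.
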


The aim of this subsection is to present a general method for coloring $G_{[1,b]}$ that improves the previously known results in some ranges of $b$.
The Figure \ref{wykres1} shows the comparison of bounds from \cite{Exoo2005, lonc, GrytczukJunoszaSokolWesek2016} and this paper on the number of colors depending on the value of $b$.  

We define a scheme for constructing a coloring of the plane based on hexagonal tilings. All points in a single hexagon (including some borders as on figure \ref{fig:7col_1}) will be colored with the same color. %\todo{<- śliskie, nie jest precyzyjna prawda}
 Let us start with defining the tiling. Let $H_{0,0}$ be a hexagon with two vertical sides, center in $(0,0)$, diameter equal one, and part of the boundary removed as in Figure \ref{heksagon}. Note that the width of $H_{0,0}$ equals $\frac{\sqrt{3}}{2}$.
Then let $s_1=[\frac{\sqrt{3}}{2},0]$, and $s_2=[\frac{\sqrt{3}}{4},-\frac{3}{4}]$. For $i,j\in \mathbb{Z}$ let $H_{i,j}$, be a tile created by  shifting $H_{0,0}$ by a vector $i\cdot s_1+j\cdot s_2$, namely $H_{i,j}:=\{(x,y)+i\cdot s_1+j\cdot s_2\in\RR:\ (x,y)\in H_{0,0}\}$.  %\js{dopisalam o przesunieciu, bo analizowanie co jest w tych klamerkach jest wg mnie zbytecznym wysilkiem}.
Notice that $\{H_{i,j}:i,j\in \mathbb{Z}\}$ forms a partition of the plane, which we call a hexagonal tiling.% \todo{nigdzie nie jest powiedziane, że heksagonem nazywamy H - przecież jest o H_{0,0}}

Now for a pair of non-negative integers $(p,q)$, we define a coloring such that for any $(i,j)\in \mathbb{Z}^2$ hexagons $H_{i,j}, H_{i+p,j+q},H_{i+p+q,j-p}$ have the same color. Notice that the centers of such three hexagons form an equilateral triangle (see Figure \ref{7pattern}). By reapplying this rule of a single colored triangles, we obtain that sets of the form $\{H_{i+k\cdot p + l\cdot (p+q),j+k\cdot q-l\cdot p}:\ k,l\in \mathbb{Z}\}$ are monochromatic.  
For any $q, p$, the \emph{$(p,q)$-coloring} is the coloring in which all color classes are of this form.

%dziwne zdanie: A coloring for a plane for which all color classes are of this form is called $(p,q)$-coloring.   

Let us define the distance between two hexagons as the infimum of the distances between points from the hexagons, i.e. $$\dist (H_{i1,j1},H_{i2,j2})=\inf\{\dist(p_1,p_2):\ p_1\in H_{i1,j1}\ \textnormal{and}\ p_2\in H_{i2,j2}\}.$$
%\kjs{brakuje że dla jakiego $b$ to jest dobre kolorowanie, }

\begin{figure}[h] \centering
\subfigure[A boundary of a tile.]{\label{heksagon}
\includegraphics[height=2cm]{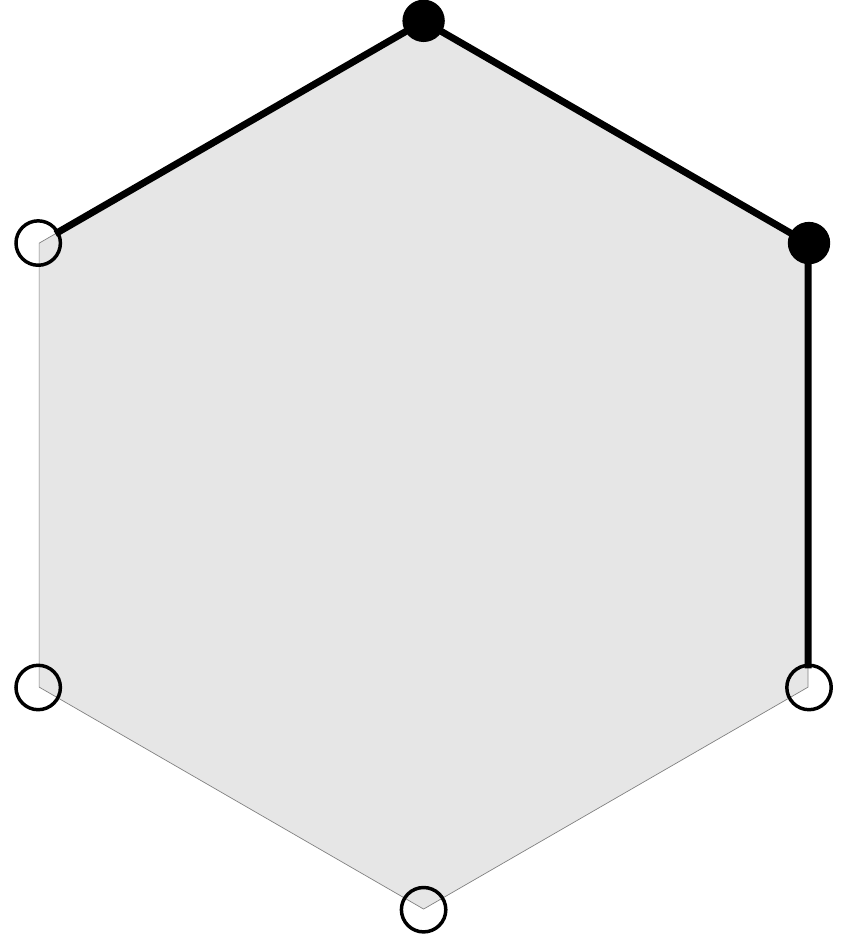}
}
\subfigure[Relation between $H_{0,0}$ and $H_{i,j}$.]{\label{Hij}\includegraphics[width=0.4\textwidth]{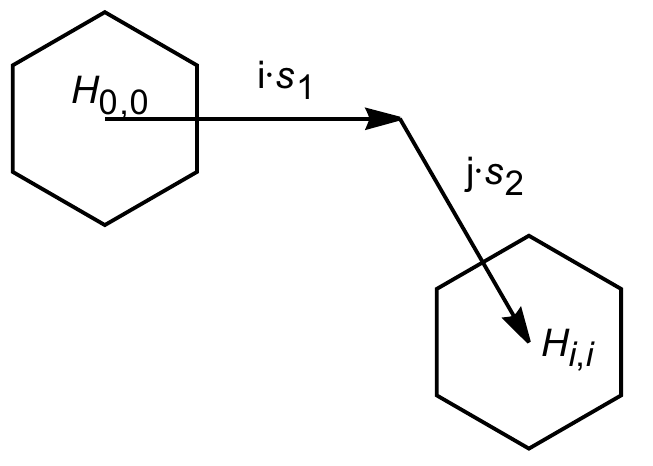}}
\subfigure[Hexagonal tiling.]{\label{tiling}\includegraphics[width=0.4\textwidth]{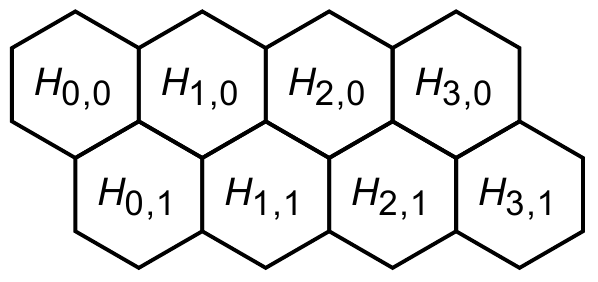}}
\caption{Hexagonal tiling.}
\end{figure}

\begin{figure}[h]
\center\includegraphics[scale=.7]{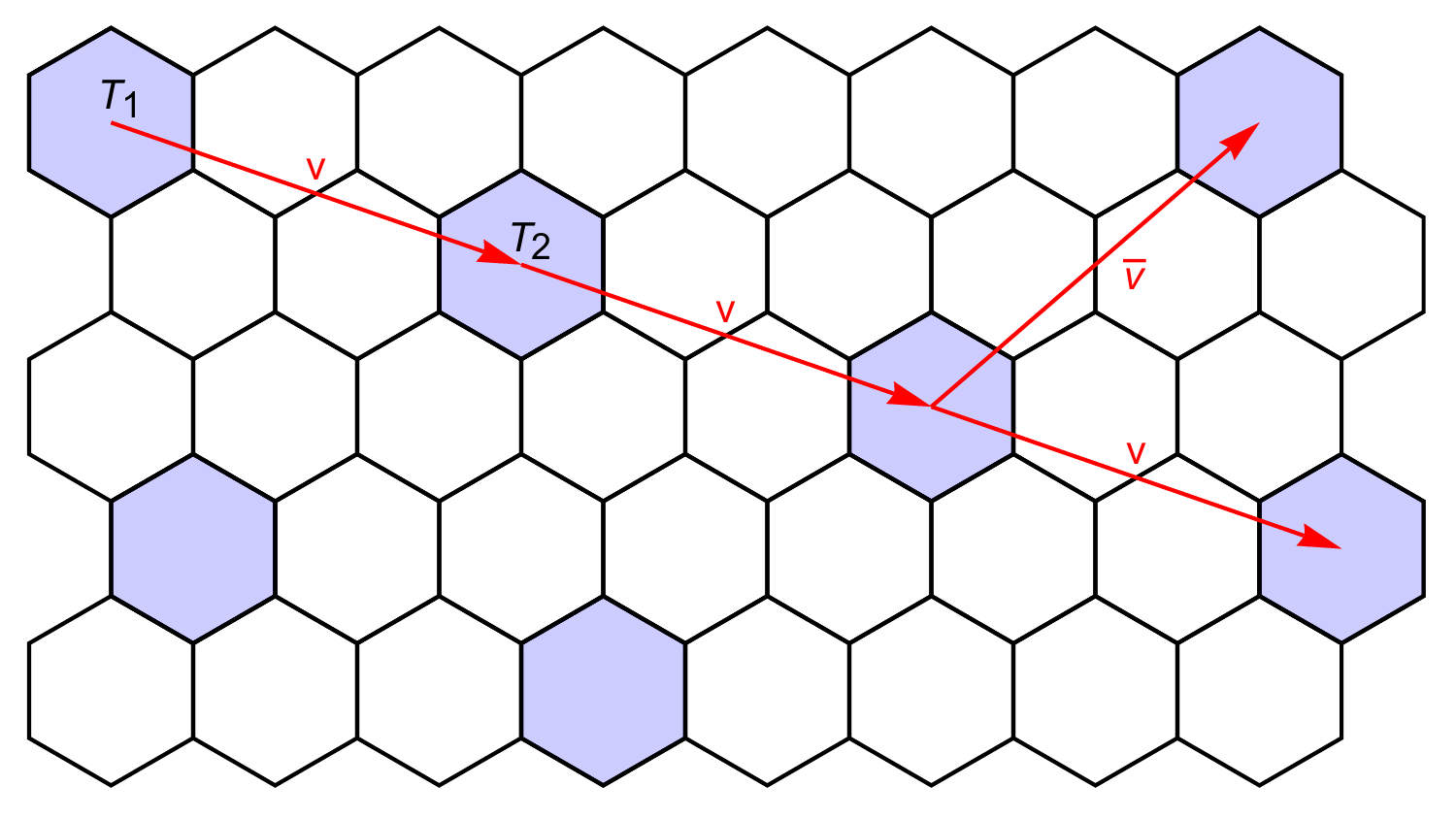}\caption{A pattern given by tiles $H_{0,0}$ and $H_{p,q}$}\label{7pattern}
\end{figure}

%\kjs{Lemat powinien być selfcontained, najlpiej nazwać nasze kolorowanie np (p,q)-hexagon coloring albo hexagon(p,g)-coloring (albo inaczej - jak wolisz ?) i w lemacie pisać hexagon(p,q) coloring uses .....}

\begin{lemma}
If $p,q\ge 1$ then  $(p,q)$-coloring uses $p^2+q^2+p q$ colors.
\end{lemma}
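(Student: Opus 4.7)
The plan is to view the indices of hexagons as elements of the lattice $\mathbb{Z}^2$ and recognize that each color class is a coset of a sublattice of $\mathbb{Z}^2$; then the number of colors is the index of that sublattice, which is a $2 \times 2$ determinant.

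First I would encode hexagons by their indices $(i,j) \in \mathbb{Z}^2$. The defining identifications of the $(p,q)$-coloring say that $(i,j)$, $(i+p, j+q)$, and $(i+p+q, j-p)$ receive the same color, so the color is invariant under the two translations $v_1 = (p, q)$ and $v_2 = (p+q, -p)$ (the translation between the other two hexagons, $v_2 - v_1 = (q, -p-q)$, lies in the group they generate, so no new identification arises). Closing under these translations, the monochromatic class containing $(i,j)$ is the full coset $(i,j) + L$, where $L = \mathbb{Z} v_1 + \mathbb{Z} v_2$. By the definition of the $(p,q)$-coloring, color classes are exactly these cosets, so the number of colors equals the index $[\mathbb{Z}^2 : L]$.

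Then I would invoke the standard fact that for a full-rank sublattice $L \subseteq \mathbb{Z}^2$ the index equals the absolute value of the determinant of any matrix of generators. Here
\[
\left| \det \begin{pmatrix} p & q \\ p+q & -p \end{pmatrix} \right| = \left| -p^2 - q(p+q) \right| = p^2 + pq + q^2,
\]
and this value is positive whenever $p, q \ge 1$, so $v_1$ and $v_2$ are linearly independent and $L$ is indeed full-rank. Hence the $(p,q)$-coloring uses exactly $p^2 + pq + q^2$ colors.

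The argument reduces to a single determinant computation and carries no real obstacle; the only point demanding a moment of care is confirming that $v_1$ and $v_2$ generate \emph{all} color-preserving translations, which holds because the coloring is defined by iterating precisely these two identifications. The hypothesis $p, q \ge 1$ enters only to ensure nondegeneracy of $L$.
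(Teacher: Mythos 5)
Your proof is correct, and it takes a genuinely different route from the paper's. You identify each color class as a coset of the sublattice $L=\mathbb{Z}(p,q)+\mathbb{Z}(p+q,-p)$ of the index lattice $\mathbb{Z}^2$ and read off the number of colors as the index $[\mathbb{Z}^2:L]$, computed as the absolute value of the determinant $\bigl|{-p^2-q(p+q)}\bigr|=p^2+pq+q^2$. The paper instead computes the same quantity by hand: it sets $d=\gcd(p,q)$, shows that a fixed color occupies exactly the rows whose index is divisible by $d$, shows that within a row the color recurs with period $m=d(p'^2+q'^2+p'q')$ where $p'=p/d$, $q'=q/d$ (by solving $kp+l(p+q)=m$, $kq-lp=0$ for the minimal positive $m$), and multiplies the two counts. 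Your argument is shorter and avoids the gcd bookkeeping entirely, at the cost of invoking the standard fact that the index of a full-rank sublattice equals the determinant of a generating matrix; the paper's argument is more elementary and self-contained, effectively rederiving that fact for this particular lattice via a Hermite-normal-form-style decomposition into row spacing and within-row period. Your observation that $v_1,v_2$ generate all color-preserving translations is exactly right and matches the paper's explicit description of the color classes as $\{H_{i+kp+l(p+q),\,j+kq-lp}:k,l\in\mathbb{Z}\}$, so there is no gap there.
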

Let us denote greatest common divisor of $p$ and $q$ as $d$ and let $p'=p/d$, $q'=q/d$.

We call the set of $H_{i,j}$, $i\in\mathbb{Z}$, the \emph{$j$-th row} of tiles. Let us call the color of $H_{0,0}$ blue and $\mathcal{T}=\{H_{k\cdot p + l\cdot (p+q),k\cdot q-l\cdot p}:\ k,l\in \mathbb{Z}\}$ denote the set of all blue hexagons. To give some insight of where $\mathcal{T}$ comes from, let $v$ denote a vector from $(0,0)$ to the center of $H_{p,q}$ ($v=p\cdot s_1+q\cdot s_2$) and $\overline{v}$ be the vector obtained from $v$ via rotating it by $\frac{\pi}{3}$ ($\overline{v}=p\cdot (s_1 - s_2)+ q\cdot s_1=(p+q)\cdot s_1-p\cdot s_2$). Then $\mathcal{T}$ is a set of all tiles created by shifting $H_{0,0}$ by $k v + l \overline{v}$, for $k,l\in \mathbb{Z}$.
First let us notice that, by the definition of $\mathcal{T}$, blue appears in rows numbered by $k\cdot q - l\cdot p=(kq'- lp')\cdot d$, for any $k,l\in \mathbb{Z}$. 
Since $p'$ and $q'$ are coprime, $\{kq'- lp':\ k,l\in \mathbb{Z}\}=\mathbb{Z}$.
 Then blue appears in a row if and only if its number is divisible by $d$.

Note that, since we used the same pattern (only shifted) for every color, we have the same number of colors in every row. Hence the total number of colors equals the number of colors used in a single row multiplied by $d$.

In order to find the number of colors used in a row, it is enough to know how often does a single color reappear in it (see example on Figure \ref{fig:7col-eps}) in one row every 7$^{\text{th}}$ hexagon is blue and we use 7 colors in each row). 

%$$p'v+q'\overline{v}=p' d (p' s_1 + q' s_2) + q' d ((p'+q') s_1 - p' s_2)=$$
%$$= d(p'p' s_1 + p'q' s_2 + q'p' s_1+q'q' s_1 - q'p' s_2) = d s_1 (p'p' + q'q' +p'q')$$

Let $m$ be the smallest positive number such that $H_{m,0}$ is colored blue. Since $H_{m,0}\in\mathcal{T}$, then there exist integers $k,l$ such that:\begin{align}
k\cdot p + l\cdot (p+q)=m,\label{e1}\\
k\cdot q-l\cdot p=0.\label{e2}
\end{align}

From (\ref{e2}) we derive $kq'=lp'$. Since $p',\ q'$ are coprime, then $k$ is divisible by $p'$ and $l$ by $q'$. Moreover $p',\ q'$ are both non-negative, hence either $k$ and $l$ are both non-positive or they are both non-negative. Without loss of generality we assume the latter.
Hence $m=d\cdot(k p' + l q' +l p')$ is minimal when $k=p'$ and $l=q'$. So the numbers of colors in a row equals $m=d(p'p' + q'q' +p'q')$, and from our previous remarks we conclude that the total number of colors equals $d^2(p'p' + q'q' +p'q')=p^2+q^2+pq$.

%\kjs{czy potrafimy sformulowac twierdzenie tak jak exoo ze dla $b$ mniejszego niz cos mamy co najwyzej tyle kolorow ?}

\begin{remark} The coloring from Exoo Theorem \ref{exoo} is $(r,r+1)$-coloring, coloring from Lonc Theorem \ref{lonc}  is $(r,r)$-coloring and coloring from \ref{1warstwowe} is $(r,0)$-coloring.  
\end{remark}
\begin{figure}[h]\begin{center}
\includegraphics[width=0.8\textwidth]{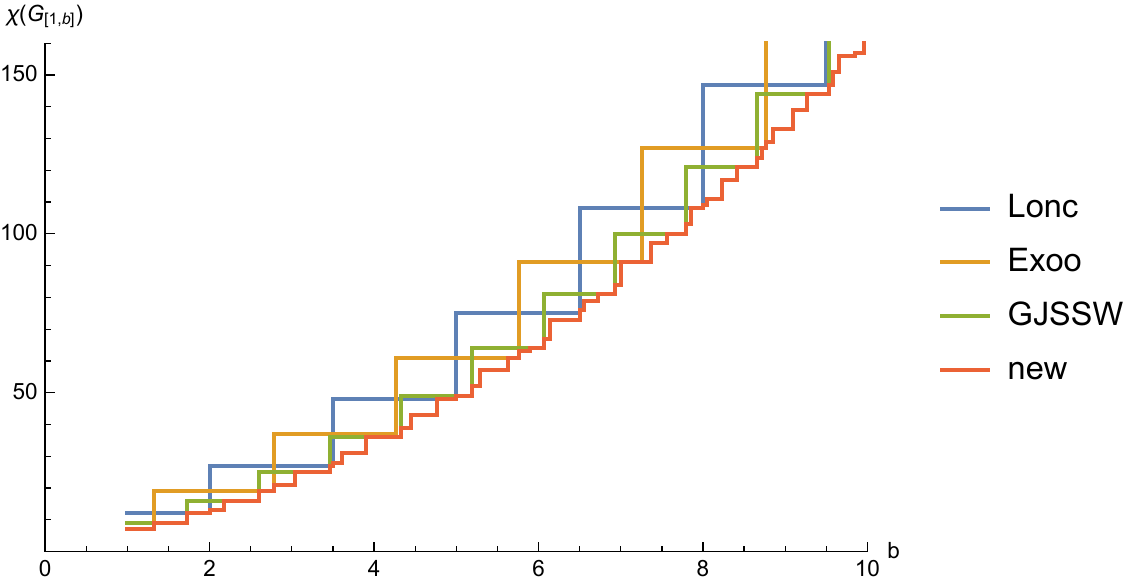}
\caption{The plot shows bounds on $\chi(G_{[1,b]})$ given by the theorems above and our best results. %\todo{Może opisy osi?}
}

\label{wykres1}
\end{center}\end{figure}

Table~\ref{tab:bounds} presents bounds on $\chi(G_{[1,b]}[\mathbb{R}^2])$.
\begin{center}
	\begin{table}[h]
		\begin{tabular}{ | c|  c|  c | c | c | c|}
			\hline
			$b$	&	$b\approx $	&	$\chi(G{[1,b]})\le $&		p	&	q & First appears in	\\
			\hline
$	 \sqrt{7}/2	$ &	1,32288	&	7	&	1	&	2	&	Isbell;  Hadwiger \cite{Hadwiger1961}	\\
$	 \sqrt{3}	$ &	1,73205	&	9	&	0	&	3	&	Ivanov \cite{Ivanov2007}	\\
$	2	$ &	2	&	12	&	2	&	2	&	Ivanov \cite{Ivanov2007}	\\
$	 \sqrt{19}/2	$ &	2,17945	&	13	&	1	&	3	&	Ivanov \cite{Ivanov2007}	\\
$	 (3 \sqrt{3})/2	$ &	2,59808	&	16	&	0	&	4	&	Ivanov \cite{Ivanov2007}	\\
$	 \sqrt{31}/2	$ &	2,78388	&	19	&	2	&	3	&	Exoo \cite{Exoo2005}	\\
$	\sqrt{37}/2	$ &	3,04138	&	21	&	1	&	4	&	Ivanov \cite{Ivanov2007}	\\
$	 2 \sqrt{3}	$ &	3,4641	&	25	&	0	&	5	&	GJSSW \cite{GrytczukJunoszaSokolWesek2016}	\\
$	 7/2	$ &	3,5	&	27	&	3	&	3	&	Lonc \cite{lonc}	\\
$	 \sqrt{13}	$ &	3,60555	&	28	&	2	&	4	&	new	\\
$	 \sqrt{61}/2	$ &	3,90512	&	31	&	1	&	5	&	new	\\
$	 (5 \sqrt{3})/2	$ &	4,33013	&	36	&	0	&	6	&	GJSSW \cite{GrytczukJunoszaSokolWesek2016}	\\
$	 \sqrt{79}/2	$ &	4,4441	&	39	&	2	&	5	&	new	\\
$	\sqrt{91}/2	$ &	4,7697	&	43	&	1	&	6	&	new	\\
$	5	$ &	5	&	48	&	4	&	4	&	Lonc \cite{lonc}	\\
$	 3 \sqrt{3}	$ &	5,19615	&	49	&	0	&	7	&	GJSSW \cite{GrytczukJunoszaSokolWesek2016}	\\
$	 2 \sqrt{7}	$ &	5,2915	&	52	&	2	&	6	&	new	\\
$	 \sqrt{127}/2	$ &	5,63471	&	57	&	1	&	7	&	new	\\
$	\sqrt{133}/2	$ &	5,76628	&	61	&	4	&	5	&	Exoo \cite{Exoo2005}	\\
$	 \sqrt{139}/2	$ &	5,89491	&	63	&	3	&	6	&	new	\\
$	 (7 \sqrt{3})/2	$ &	6,06218	&	64	&	0	&	8	&	GJSSW \cite{GrytczukJunoszaSokolWesek2016}	\\
$	 \sqrt{151}/2	$ &	6,1441	&	67	&	2	&	7	&	new	\\
$	13/2	$ &	6,5	&	75	&	5	&	5	&	Lonc \cite{lonc}	\\
$	 \sqrt{43}	$ &	6,55744	&	76	&	4	&	6	&	new	\\
$	 \sqrt{181}/2	$ &	6,72681	&	79	&	3	&	7	&	new	\\
$	 4 \sqrt{3}	$ &	6,9282	&	81	&	0	&	9	&	GJSSW \cite{GrytczukJunoszaSokolWesek2016}	\\
$	7	$ &	7	&	84	&	2	&	8	&	new	\\
$	 \sqrt{211}/2	$ &	7,26292	&	91	&	5	&	6	&	Exoo \cite{Exoo2005}	\\
$	 \sqrt{217}/2	$ &	7,36546	&	93	&	4	&	7	&	new	\\
$	 \sqrt{229}/2	$ &	7,56637	&	97	&	3	&	8	&	new	\\
$	 (9 \sqrt{3})/2	$ &	7,79423	&	100	&	0	&	10	&	GJSSW \cite{GrytczukJunoszaSokolWesek2016}	\\
$	 \sqrt{247}/2	$ &	7,85812	&	103	&	2	&	9	&	new	\\
$	8	$ &	8	&	108	&	6	&	6	&	Lonc \cite{lonc}	\\
$	 \sqrt{259}/2	$ &	8,04674	&	109	&	5	&	7	&	new	\\
$	\sqrt{271}/2	$ &	8,23104	&	111	&	1	&	10	&	new	\\
$	 \sqrt{283}/2	$ &	8,4113	&	117	&	3	&	9	&	new	\\
$	2 \sqrt{19}	$ &	8,7178	&	124	&	2	&	10	&	new	\\
$	 \sqrt{307}/2	$ &	8,76071	&	127	&	6	&	7	&	Exoo \cite{Exoo2005}	\\
$	\sqrt{313}/2	$ &	8,8459	&	129	&	5	&	8	&	new	\\
$	 (5 \sqrt{13})/2	$ &	9,01388	&	133	&	4	&	9	&	new	\\
$	 (7 \sqrt{7})/2	$ &	9,26013	&	139	&	3	&	10	&	new	\\
$	 19/2	$ &	9,5	&	147	&	7	&	7	&	Lonc \cite{lonc}	\\
$	\sqrt{91}	$ &	9,53939	&	148	&	6	&	8	&	new	\\
$	 \sqrt{373}/2	$ &	9,6566	&	151	&	5	&	9	&	new	\\
$	\sqrt{97}	$ &	9,84886	&	156	&	4	&	10	&	new	\\
$	 \sqrt{421}/2	$ &	10,2591	&	169	&	7	&	8	&	Exoo \cite{Exoo2005}	\\
$	\sqrt{427}/2	$ &	10,332	&	171	&	6	&	9	&	new	\\
$	 \sqrt{439}/2	$ &	10,4762	&	175	&	5	&	10	&	new	\\
$	11	$ &	11	&	192	&	8	&	8	&	Lonc \cite{lonc}	\\
$	 \sqrt{487}/2	$ &	11,034	&	193	&	7	&	9	&	new	\\
$	 2 \sqrt{31}	$ &	11,1355	&	196	&	6	&	10	&	new	\\
$	 \sqrt{553}/2	$ &	11,758	&	217	&	8	&	9	&	Exoo \cite{Exoo2005}	\\
$	\sqrt{559}/2	$ &	11,8216	&	219	&	7	&	10	&	new	\\
$	 25/2	$ &	12,5	&	243	&	9	&	9	&	Lonc \cite{lonc}	\\
$	\sqrt{157}	$ &	12,53	&	244	&	8	&	10	&	new	\\
$	 \sqrt{703}/2	$ &	13,2571	&	271	&	9	&	10	&	Exoo \cite{Exoo2005}	\\
$	14	$ &	14	&	300	&	10	&	10	&	Lonc \cite{lonc}	\\
			\hline
			
		\end{tabular}
		\caption{Bounds on $\chi(G_{[1,b]}[A_{b}])$ depending on $b$ achieved by $(p,q)$-coloring. }
		\label{tab:bounds}
	\end{table}
\end{center}

\section{Conclusions}

We note that our method of constructing lower bounds combines theoretical reasoning of continuous nature and constructions of finite sets for which the coloring properties are checked by computer. Therefore, the approach differs from the previously used in the literature. Similar constructions for larger values of $b$ and a larger number of colors can be designed.
However, it seems that the method should work better for relatively small values of $b$ (and hence a small number of colors), as in this case the $3$ colors reserved by the $\varepsilon$-ball make a greater difference.
%for which the $3$ colors reserved by the $\varepsilon$-ball make a difference.

One may observe that we do not have any interval with the chromatic number determined to $8$, $10$ or $11$ colors - even though Theorem~\ref{thm:main} gives some results concerning these numbers of colors. The reason seems to be that we do not have sufficiently good $k$-colorings of the plane for $k\in \{8,10,11\}$. That is, a~$k$-coloring of $G_{[1,b]}$ that would work for sufficiently large $b$. In particular, let us consider $8$ colors. In Theorem~\ref{thm:8-col}, we present an $8$-coloring for $b=1.37542$. However, it is still not enough, as the corresponding lower bound from Theorem~\ref{thm:main} works only for $b>\sqrt{2+2\sin(\frac{\pi}{38})} \approx 1.47145$. The gap is relatively large and seems that closing it, if possible, would require a new idea -- note that Theorem~\ref{thm:8-col} is based on careful use of the coloring scheme of the classic $7$-coloring (see Figure~\ref{fig:7col-eps}). It would be interesting to obtain an $8$-coloring of $G_{[1,b]}$ even for $b>1.4$. Moreover, we do not have any $10$-coloring or any $11$-coloring for $b>\sqrt{3}$, while $b=\sqrt{3}$ is satisfied by the $9$-coloring from Figure~\ref{fig:9col}. May it be that adding $2$ colors to these $9$ colors does not make any difference? Let us conclude this part of the discussion with the following open problem:
\begin{conjecture}
	For any integer $k\ge 7$, there exists $b>1$ such that $\chi(G_{[1,b]})=k$.
\end{conjecture}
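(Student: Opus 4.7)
The plan is to study the function $f(b) := \chi(G_{[1,b]})$ for $b>1$ and establish three properties: (a) $f$ is non-decreasing, (b) $f(b)\to\infty$ as $b\to\infty$, and (c) $f$ skips no integer values once it first exceeds $6$. Combined with Corollary~\ref{cor:determined}, which shows $f$ attains the value $7$, these would imply the conjecture. Monotonicity is immediate from $G_{[1,b]}\subseteq G_{[1,b']}$ whenever $b\le b'$. Unboundedness follows from a packing argument: in any proper coloring of $G_{[1,b]}$, the intersection of a color class with a closed disk of radius $b/2$ must have diameter strictly less than $1$, since the disk has diameter $b$ and any two points inside it at distance in $[1,b]$ realize a forbidden pair, so a standard packing count forces $f(b)\gtrsim b^2$.

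The main obstacle is the no-skipping property (c). The natural strategy is a two-step limiting argument. First, show $f$ is left-continuous via a De~Bruijn--Erd\H{o}s compactness argument applied to finite subgraphs, being careful with edges of length exactly $b^*$: if $f(b)\le k$ for every $b<b^*$, then every finite subgraph of $G_{[1,b^*]}$ whose edge lengths lie in $[1,b^*)$ is $k$-colorable, and an additional continuity argument is needed to handle finite subgraphs that use the exact length $b^*$. Second, show that any jump of $f$ has size at most $1$ by exhibiting, for each proper $k$-coloring $c$ of $G_{[1,b^*]}$, an extension to a proper $(k+1)$-coloring of $G_{[1,b^*+\varepsilon]}$ for some $\varepsilon>0$. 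The hard part is this extension step: one needs a new color class $S\subseteq\mathbb{R}^2$ that simultaneously breaks every $c$-monochromatic pair at distance in $(b^*,b^*+\varepsilon]$ while containing no pair at distance in $[1,b^*+\varepsilon]$. It is far from clear that such an $S$ always exists, and this is where I expect the proof to have to genuinely bite.

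A concrete alternative route would be to produce, for each $k\ge 7$, an explicit $b_k$ with matching upper and lower bounds forcing $\chi(G_{[1,b_k]})=k$. The $(p,q)$-colorings of Section~\ref{sec:upper} cover only the Loeschian numbers $p^2+pq+q^2$, so new constructions would be required for $k\in\{8,10,11,14,15,17,18,\ldots\}$. Even for $k=8$, the discussion in the Conclusions shows that Theorem~\ref{thm:8-col} gives the best known $8$-coloring only up to $b\approx 1.375$, while Theorem~\ref{thm:main} yields the lower bound $\chi(G_{[1,b]})\ge 8$ only from $b\approx 1.471$ onward, leaving a gap where neither bound pins $\chi$ down to exactly $8$. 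Closing this gap, and devising analogous $k$-colorings for each missing $k$, appears to require genuinely new coloring ideas beyond the hexagonal scheme and the triangle-augmentation trick of Theorem~\ref{thm:8-col}, confirming the conjecture as a genuinely hard open question rather than a formal consequence of the results already in the paper.
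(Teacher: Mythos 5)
The statement you were asked to prove is posed in the paper as a \emph{conjecture}: the paper offers no proof, only the partial evidence of Corollary~\ref{cor:determined} (the values $k=7$ and $k=9$ are attained) and an explicit discussion in the Conclusions of why $k\in\{8,10,11\}$ remain open. Your proposal correctly recognizes this and does not claim a proof, so there is no false argument to object to; but equally there is no proof here, and the honest verdict is that the statement remains open. Your strategic decomposition is sound as far as it goes: monotonicity of $f(b)=\chi(G_{[1,b]})$ is indeed immediate from edge-set inclusion, and unboundedness is best obtained from a clique rather than an area count --- take $\Theta(b^2)$ points in a disk of radius $b/2$ that are pairwise at distance at least $1$ and hence pairwise adjacent --- which avoids any appeal to measurability of the color classes (your phrasing ``diameter strictly less than $1$'' is also slightly off: one only gets that all pairwise distances are less than $1$, so the diameter is at most $1$). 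You also correctly note that (a) and (b) alone do not suffice, since a non-decreasing integer-valued function can skip values at a jump.

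The two sub-steps you isolate for the no-skipping property are exactly where the difficulty lies, and neither is known. Left-continuity is not a formal consequence of De Bruijn--Erd\H{o}s: a critical finite subgraph of $G_{[1,b^*]}$ may use edges of length exactly $b^*$ as well as edges of length exactly $1$, and neither shrinking $b$ nor rescaling the configuration preserves both constraints simultaneously; the jump from $\chi(G_{\{1\}})$ (possibly as low as $5$) to $\chi(G_{[1,b]})\ge 6$ for every $b>1$ (Theorem~\ref{thm:>=6}) shows that precisely this kind of endpoint limit can fail. The extension step --- absorbing all new monochromatic pairs at distances in $(b^*,b^*+\varepsilon]$ into a single new color class that itself avoids all distances in $[1,b^*+\varepsilon]$ --- has no known mechanism behind it. Your alternative route via explicit constructions correctly observes that the $(p,q)$-colorings realize only color counts of the form $p^2+pq+q^2$, and that even for $k=8$ the upper bound of Theorem~\ref{thm:8-col} ($b=1.37542$) falls short of the lower-bound threshold of Theorem~\ref{thm:main} ($b>\sqrt{2+2\sin(\pi/38)}\approx 1.47145$), so no value of $b$ is currently pinned to $\chi=8$. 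In short: your analysis is a fair and accurate map of the obstacles, but the conjecture is proved neither by you nor by the paper, and your proposal should be read as a discussion of the open problem rather than as a proof.
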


%One may observe that we do not have any interval with the chromatic number determined to $8$ colors. The reason seems to be that we do not have a sufficiently good $8$-coloring of the plane. That is, an~$8$-coloring of $G_{[1,b]}$ that would work for sufficiently large $b$. In Theorem~\ref{thm:8-col}, we give an $8$-coloring for $b=1.37542$, while the lower bound from Theorem~\ref{thm:main} works only for $b>\sqrt{2+2\sin(\frac{\pi}{38})} \approx 1.47145$. The gap is still quite large and seems that closing it requires a new idea - note that Theorem~\ref{thm:8-col} is based on a careful use of the coloring scheme of the classic $7$-coloring (see Figure~\ref{fig:7col-eps}). It would be interesting to obtain an $8$-coloring of $G_{[1,b]}$ even for $b>1.4$.

Our 8-coloring does not have the property which in \cite{GrytczukJunoszaSokolWesek2016} is called solid coloring, that is every color class consists of regions pairwise at a distance of at least one. Solid colorings can be applied in online colorings of disc intersections graphs. We post as an open question if there is a solid 8-coloring of $G_{[1,b]}$ for any $b>\sqrt{7}/2$. 

For general upper bounds, we concentrate on the colorings of the Euclidean plane based on hexagonal tiling.
However, this method can be adjusted to the setting of other regular tilings of the plane. It can also be applied to coloring Euclidean spaces of higher dimensions.

\section*{Acknowledgement}
We solve instances of the problem using IBM ILOG CPLEX solver (version 12.7.1) and ZIMPL model generating language \cite{Koch2004}. Data will be made available on reasonable request

\end{document}